\DeclareMathAlphabet{\mathpzc}{OT1}{pzc}{m}{it}
\newtheorem{thm}{Theorem}[section] 
\newtheorem{cor}[thm]{Corollary}
\newtheorem{qthm}[thm]{Claim}
\newtheorem{rem}{Remark}
\newtheorem{example}{Example}
\numberwithin{equation}{section}
\newcommand{\nc}{\newcommand}
\nc{\renc}{\renewcommand}
\nc{\ssec}{\subsection}
\nc{\sssec}{\subsubsection}
\nc{\on}{\operatorname}
\nc\ol{\overline}
\nc\wt{\widetilde}
\nc\tboxtimes{\wt{\boxtimes}}
\nc\tstar{\wt{\star}}
\nc{\alp}{\alpha}
\nc{\ZZ}{{\mathbb Z}}
\nc{\NN}{{\mathbb N}}
\nc{\OO}{{\mathbb O}}
\renc{\SS}{{\mathbb S}}
\nc{\DD}{{\mathbb D}}
\nc{\GG}{{\mathbb G}}
\nc{\Fq}{{\mathbb F}_q}
\nc{\Fqb}{\ol{{\mathbb F}_q}}
\nc{\Ql}{\ol{{\mathbb Q}_\ell}}
\nc{\id}{\text{id}}
\nc\X{\mathcal X}
\nc{\Hom}{\on{Hom}}
\nc{\Lie}{\on{Lie}}
\nc{\Loc}{\on{Loc}}
\nc{\Pic}{\on{Pic}}
\nc{\Bun}{\on{Bun}}
\nc{\IC}{\on{IC}}
\nc{\Aut}{\on{Aut}}
\nc{\rk}{\on{rk}}
\nc{\Sh}{\on{Sh}}
\nc{\Perv}{\on{Perv}}
\nc{\pos}{{\on{pos}}}
\nc{\Conv}{\on{Conv}}
\nc{\Sph}{\on{Sph}}
\nc{\Sym}{\on{Sym}}
\nc{\BunBb}{\overline{\Bun}_B}
\nc{\BunNb}{\overline{\Bun}_N}
\nc{\BunTb}{\overline{\Bun}_T}
\nc{\BunBbm}{\overline{\Bun}_{B^-}}
\nc{\BunBbel}{\overline{\Bun}_{B,el}}
\nc{\BunBbmel}{\overline{\Bun}_{B^-,el}}
\nc{\Buno}{\overset{o}{\Bun}}
\nc{\BunPb}{{\overline{\Bun}_P}}
\nc{\BunBM}{\Bun_{B(M)}}
\nc{\BunBMb}{\overline{\Bun}_{B(M)}}
\nc{\BunPbw}{{\widetilde{\Bun}_P}}
\nc{\BunBP}{\widetilde{\Bun}_{B,P}}
\nc{\GUb}{\overline{G/U}}
\nc{\GUPb}{\overline{G/U(P)}}
\nc\syminfty{\on{Sym}^{\infty}}
\nc\lal{\ol{\lambda}}
\nc\xl{\ol{x}}
\nc\thl{\ol{\theta}}
\nc\nul{\ol{\nu}}
\nc\mul{\ol{\mu}}
\nc{\oX}{\overset{\circ}{X}{}}
\nc{\hl}{\overset{\leftarrow}h{}}
\nc{\hr}{\overset{\rightarrow}h{}}
\nc{\M}{{\mathcal M}}
\nc{\N}{{\mathcal N}}
\nc{\F}{{\mathcal F}}
\nc{\D}{{\mathcal D}}
\nc{\Y}{{\mathcal Y}}
\nc{\G}{{\mathcal G}}
\nc{\E}{{\mathcal E}}
\nc{\CalC}{{\mathcal C}}
\nc\Dh{\widehat{\D}}
\nc{\K}{{\mathcal K}}
\nc{\T}{{\mathcal T}}
\nc{\V}{{\mathcal V}}
\renc{\P}{{\mathcal P}}
\nc{\A}{{\mathcal A}}
\nc{\B}{{\mathcal B}}
\nc{\U}{{\mathcal U}}
\nc{\frn}{{\check{\mathfrak u}(P)}}
\nc{\fC}{\mathfrak C}
\nc\f{{\mathfrak f}}
\nc{\qo}{{\mathfrak q}}
\nc{\po}{{\mathfrak p}}
\nc{\s}{{\mathfrak s}}
\nc\w{\text{w}}
\renewcommand{\r}{{\mathfrak r}}
\renewcommand{\mod}{{\on{-}\mathsf{mod}}}
\nc\Spec{\on{Spec}}
\nc\Mod{\on{Mod}}
\nc{\tw}{\widetilde{\mathfrak t}}
\nc{\pw}{\widetilde{\mathfrak p}}
\nc{\qw}{\widetilde{\mathfrak q}}
\nc{\jw}{\widetilde j}
\nc{\grb}{\overline{\Gr_{X^{\fset}}}}
\nc{\I}{\mathcal I}
\renewcommand{\i}{\mathfrak i}
\renewcommand{\j}{\mathfrak j}
\nc{\lambdach}{{\check\lambda}}
\nc{\Lambdach}{{\check\Lambda}{}}
\nc{\much}{{\check\mu}}
\nc{\omegach}{{\check\omega}}
\nc{\nuch}{{\check\nu}}
\nc{\etach}{{\check\eta}}
\nc{\alphach}{{\check\alpha}}
\nc{\rhoch}{{\check\rho}}
\nc{\Hb}{\overline{\H}}
\nc{\BA}{{\mathbb{A}}}
\nc{\BB}{\mathbb{B}}
\nc{\BC}{{\mathbb{C}}}
\nc{\BD}{{\mathbb{D}}}
\nc{\BE}{{\mathbb{E}}}
\nc{\BF}{{\mathbb{F}}}
\nc{\BG}{{\mathbb{G}}}
\nc{\BH}{{\mathbb{H}}}
\nc{\BI}{{\mathbb{I}}}
\nc{\BM}{{\mathbb{M}}}
\nc{\BN}{{\mathbb{N}}}
\nc{\BO}{{\mathbb{O}}}
\nc{\BP}{{\mathbb{P}}}
\nc{\BQ}{{\mathbb{Q}}}
\nc{\BR}{{\mathbb{R}}}
\nc{\BS}{{\mathbb{S}}}
\nc{\BT}{{\mathbb{T}}}
\nc{\BV}{{\mathbb{V}}}
\nc{\BZ}{{\mathbb{Z}}}
\nc{\bbone}{\mathbbm{1}}
\nc{\bbA}{{\mathbb{A}}}
\nc{\bbB}{\mathbb{B}}
\nc{\bbC}{{\mathbb{C}}}
\nc{\bbD}{{\mathbb{D}}}
\nc{\bbE}{{\mathbb{E}}}
\nc{\bbF}{{\mathbb{F}}}
\nc{\bbG}{{\mathbb{G}}}
\nc{\bbH}{{\mathbb{H}}}
\nc{\bbI}{{\mathbb{I}}}
\nc{\bbL}{{\mathbb{L}}}
\nc{\bbM}{{\mathbb{M}}}
\nc{\bbN}{{\mathbb{N}}}
\nc{\bbO}{{\mathbb{O}}}
\nc{\bbP}{{\mathbb{P}}}
\nc{\bbQ}{{\mathbb{Q}}}
\nc{\bbR}{{\mathbb{R}}}
\nc{\bbS}{{\mathbb{S}}}
\nc{\bbT}{{\mathbb{T}}}
\nc{\bbU}{{\mathbb{U}}}
\nc{\bbV}{{\mathbb{V}}}
\nc{\bbW}{{\mathbb{W}}}
\nc{\bbX}{{\mathbb{X}}}
\nc{\bbY}{{\mathbb{Y}}}
\nc{\bbZ}{{\mathbb{Z}}}
\nc{\CA}{{\mathcal{A}}}
\nc{\CB}{{\mathcal{B}}}
\nc{\CE}{{\mathcal{E}}}
\nc{\CF}{{\mathcal{F}}}
\nc{\CH}{{\mathcal{H}}}
\nc{\CL}{{\mathcal{L}}}
\nc{\CC}{{\mathcal{C}}}
\nc{\CG}{{\mathcal{G}}}
\nc{\CM}{{\mathcal{M}}}
\nc{\CN}{{\mathcal{N}}}
\nc{\CK}{{\mathcal{K}}}
\nc{\CO}{{\mathcal{O}}}
\nc{\CP}{{\mathcal{P}}}
\nc{\CQ}{{\mathcal{Q}}}
\nc{\CR}{{\mathcal{R}}}
\nc{\CS}{{\mathcal{S}}}
\nc{\CU}{{\mathcal{U}}}
\nc{\CV}{{\mathcal{V}}}
\nc{\CW}{{\mathcal{W}}}
\nc{\CX}{{\mathcal{X}}}
\nc{\CY}{{\mathcal{Y}}}
\nc{\CZ}{{\mathcal{Z}}}
\nc{\CI}{{\mathcal{I}}}
\nc{\csM}{{\check{\mathcal A}}{}}
\nc{\oM}{{\overset{\circ}{\mathcal M}}{}}
\nc{\obM}{{\overset{\circ}{\mathbf M}}{}}
\nc{\oCA}{{\overset{\circ}{\mathcal A}}{}}
\nc{\obA}{{\overset{\circ}{\mathbf A}}{}}
\nc{\ooM}{{\overset{\circ}{M}}{}}
\nc{\osM}{{\overset{\circ}{\mathsf M}}{}}
\nc{\vM}{{\overset{\bullet}{\mathcal M}}{}}
\nc{\nM}{{\underset{\bullet}{\mathcal M}}{}}
\nc{\oD}{{\overset{\circ}{\mathcal D}}{}}
\nc{\obC}{{\overset{\circ}{\mathbf C}}{}}
\nc{\obD}{{\overset{\circ}{\mathbf D}}{}}
\nc{\oA}{{\overset{\circ}{\mathbb A}}{}}
\nc{\op}{{\overset{\bullet}{\mathbf p}}{}}
\nc{\oU}{{\overset{\bullet}{\mathcal U}}{}}
\nc{\oZ}{{\overset{\circ}{\mathcal Z}}{}}
\nc{\ofZ}{{\overset{\circ}{\mathfrak Z}}{}}
\nc{\oF}{{\overset{\circ}{\fF}}}
\nc{\fa}{{\mathfrak{a}}}
\nc{\fb}{{\mathfrak{b}}}
\nc{\fc}{{\mathfrak{c}}}
\nc{\fd}{{\mathfrak{d}}}
\nc{\ff}{{\mathfrak{f}}}
\nc{\fg}{{\mathfrak{g}}}
\nc{\fgl}{{\mathfrak{gl}}}
\nc{\fh}{{\mathfrak{h}}}
\nc{\fj}{{\mathfrak{j}}}
\nc{\fl}{{\mathfrak{l}}}
\nc{\fm}{{\mathfrak{m}}}
\nc{\fn}{{\mathfrak{n}}}
\nc{\fu}{{\mathfrak{u}}}
\nc{\fp}{{\mathfrak{p}}}
\nc{\fr}{{\mathfrak{r}}}
\nc{\fs}{{\mathfrak{s}}}
\nc{\ft}{{\mathfrak{t}}}
\nc{\fz}{{\mathfrak{z}}}
\nc{\fsl}{{\mathfrak{sl}}}
\nc{\hsl}{{\widehat{\mathfrak{sl}}}}
\nc{\hgl}{{\widehat{\mathfrak{gl}}}}
\nc{\hg}{{\widehat{\mathfrak{g}}}}
\nc{\chg}{{\widehat{\mathfrak{g}}}{}^\vee}
\nc{\hn}{{\widehat{\mathfrak{n}}}}
\nc{\chn}{{\widehat{\mathfrak{n}}}{}^\vee}
\nc{\fA}{{\mathfrak{A}}}
\nc{\fB}{{\mathfrak{B}}}
\nc{\fD}{{\mathfrak{D}}}
\nc{\fF}{{\mathfrak{F}}}
\nc{\fG}{{\mathfrak{G}}}
\nc{\fK}{{\mathfrak{K}}}
\nc{\fL}{{\mathfrak{L}}}
\nc{\fM}{{\mathfrak{M}}}
\nc{\fN}{{\mathfrak{N}}}
\nc{\fP}{{\mathfrak{P}}}
\nc{\fU}{{\mathfrak{U}}}
\nc{\fV}{{\mathfrak{V}}}
\nc{\fX}{{\mathfrak{X}}}
\nc{\fY}{{\mathfrak{Y}}}
\nc{\fZ}{{\mathfrak{Z}}}
\nc{\bb}{{\mathbf{b}}}
\nc{\bc}{{\mathbf{c}}}
\nc{\bd}{{\mathbf{d}}}
\nc{\bbf}{{\mathbf{f}}}
\nc{\be}{{\mathbf{e}}}
\nc{\bg}{{\mathbf{g}}}
\nc{\bj}{{\mathbf{j}}}
\nc{\bn}{{\mathbf{n}}}
\nc{\bo}{{\mathbf{o}}}
\nc{\bp}{{\mathbf{p}}}
\nc{\bq}{{\mathbf{q}}}
\nc{\bt}{{\mathbf{t}}}
\nc{\bu}{{\mathbf{u}}}
\nc{\bv}{{\mathbf{v}}}
\nc{\bx}{{\mathbf{x}}}
\nc{\bs}{{\mathbf{s}}}
\nc{\by}{{\mathbf{y}}}
\nc{\bw}{{\mathbf{w}}}
\nc{\bA}{{\mathbf{A}}}
\nc{\bK}{{\mathbf{K}}}
\nc{\bB}{{\mathbf{B}}}
\nc{\bC}{{\mathbf{C}}}
\nc{\bG}{{\mathbf{G}}}
\nc{\bD}{{\mathbf{D}}}
\nc{\bH}{{\mathbf{H}}}
\nc{\bM}{{\mathbf{M}}}
\nc{\bN}{{\mathbf{N}}}
\nc{\bO}{{\mathbf{O}}}
\nc{\bT}{{\mathbf{T}}}
\nc{\bV}{{\mathbf{V}}}
\nc{\bW}{{\mathbf{W}}}
\nc{\bX}{{\mathbf{X}}}
\nc{\bZ}{{\mathbf{Z}}}
\nc{\bS}{{\mathbf{S}}}
\nc{\sA}{{\mathsf{A}}}
\nc{\sB}{{\mathsf{B}}}
\nc{\sC}{{\mathsf{C}}}
\nc{\sD}{{\mathsf{D}}}
\nc{\sF}{{\mathsf{F}}}
\nc{\sG}{{\mathsf{G}}}
\nc{\sK}{{\mathsf{K}}}
\nc{\sM}{{\mathsf{M}}}
\nc{\sO}{{\mathsf{O}}}
\nc{\sW}{{\mathsf{W}}}
\nc{\sQ}{{\mathsf{Q}}}
\nc{\sP}{{\mathsf{P}}}
\nc{\sV}{{\mathsf{V}}}
\nc{\sS}{{\mathsf{S}}}
\nc{\sT}{{\mathsf{T}}}
\nc{\sZ}{{\mathsf{Z}}}
\nc{\sfp}{{\mathsf{p}}}
\nc{\sll}{{\mathsf{l}}}
\nc{\sr}{{\mathsf{r}}}
\nc{\bk}{{\mathsf{k}}}
\nc{\sg}{{\mathsf{g}}}
\nc{\sff}{{\mathsf{f}}}
\nc{\sfb}{{\mathsf{b}}}
\nc{\sfc}{{\mathsf{c}}}
\nc{\sd}{{\mathsf{d}}}
\nc{\se}{{\mathsf{e}}}
\nc{\BK}{{\bar{K}}}
\nc{\tA}{{\widetilde{\mathbf{A}}}}
\nc{\tB}{{\widetilde{\mathcal{B}}}}
\nc{\tg}{{\widetilde{\mathfrak{g}}}}
\nc{\tG}{{\widetilde{G}}}
\nc{\TM}{{\widetilde{\mathbb{M}}}{}}
\nc{\tO}{{\widetilde{\mathsf{O}}}{}}
\nc{\tU}{{\widetilde{\mathfrak{U}}}{}}
\nc{\TZ}{{\tilde{Z}}}
\nc{\tx}{{\tilde{x}}}
\nc{\tbv}{{\tilde{\bv}}}
\nc{\tfP}{{\widetilde{\mathfrak{P}}}{}}
\nc{\tz}{{\tilde{\zeta}}}
\nc{\tmu}{{\tilde{\mu}}}
\nc{\urho}{\underline{\rho}}
\nc{\uB}{\underline{B}}
\nc{\uC}{{\underline{\mathbb{C}}}}
\nc{\ui}{\underline{i}}
\nc{\uj}{\underline{j}}
\nc{\ofP}{{\overline{\mathfrak{P}}}}
\nc{\oB}{{\overline{\mathcal{B}}}}
\nc{\og}{{\overline{\mathfrak{g}}}}
\nc{\oI}{{\overline{I}}}
\nc{\eps}{\varepsilon}
\nc{\hrho}{{\hat{\rho}}}
\nc{\one}{{\mathbf{1}}}
\nc{\two}{{\mathbf{t}}}
\nc{\Rep}{{\mathop{\operatorname{\rm Rep}}}}
\nc{\Tot}{{\mathop{\operatorname{\rm Tot}}}}
\nc{\Ker}{{\mathop{\operatorname{\rm Ker}}}}
\nc{\Hilb}{{\mathop{\operatorname{\rm Hilb}}}}
\nc{\Ext}{{\mathop{\operatorname{\rm Ext}}}}
\nc{\CHom}{{\mathop{\operatorname{{\mathcal{H}}\it om}}}}
\nc{\GL}{{\mathop{\operatorname{\rm GL}}}}
\nc{\gr}{{\mathop{\operatorname{\rm gr}}}}
\nc{\Id}{{\mathop{\operatorname{\rm Id}}}}
\nc{\de}{{\mathop{\operatorname{\rm def}}}}
\nc{\length}{{\mathop{\operatorname{\rm length}}}}
\nc{\supp}{{\mathop{\operatorname{\rm supp}}}}
\nc{\Cliff}{{\mathsf{Cliff}}}
\nc{\Fl}{\on{Fl}}
\nc{\Fib}{{\mathsf{Fib}}}
\nc{\Coh}{{\on{Coh}}}
\nc{\QCoh}{{\on{QCoh}}}
\nc{\IndCoh}{{\on{IndCoh}}}
\nc{\FCoh}{{\mathsf{FCoh}}}
\nc{\reg}{{\text{\rm reg}}}
\nc{\cplus}{{\mathbf{C}_+}}
\nc{\cminus}{{\mathbf{C}_-}}
\nc{\cthree}{{\mathbf{C}_*}}
\nc{\Qbar}{{\bar{Q}}}
\nc\Eis{\on{Eis}}
\nc\Eisb{\ol\Eis{}}
\nc\Eisr{\on{Eis}^{rat}{}}
\nc\wh{\widehat}
\nc{\Def}{\on{Def_{\check{\fb}}(E)}}
\nc{\barZ}{\overline{Z}{}}
\nc{\barbarZ}{\overline{\barZ}{}}
\nc{\barpi}{\overline\pi}
\nc{\barbarpi}{\overline\barpi}
\nc{\barpip}{\overline\pi{}^+}
\nc{\barpim}{\overline\pi{}^-}
\nc{\fq}{\mathfrak q}
\nc{\fqb}{\ol{\fq}{}}
\nc{\fpb}{\ol{\fp}{}}
\nc{\fpr}{{\fp^{rat}}{}}
\nc{\fqr}{{\fq^{rat}}{}}
\nc{\hattimes}{\wh\otimes}
\nc{\bh}{{\bar{h}}}
\nc{\bOmega}{{\overline{\Omega(\check \fn)}}}
\nc{\seq}[1]{\stackrel{#1}{\sim}}
\nc{\cT}{{\check{T}}}
\nc{\cG}{{\check{G}}}
\nc{\cM}{{\check{M}}}
\nc{\cB}{{\check{B}}}
\nc{\cP}{{\check{P}}}
\nc{\ct}{{\check{\mathfrak t}}}
\nc{\cg}{{\check{\fg}}}
\nc{\cb}{{\check{\fb}}}
\nc{\cn}{{\check{\fn}}}
\nc{\cp}{{\check{\fp}}}
\nc{\cm}{{\check{\fm}}}
\nc{\cLambda}{{\check\Lambda}}
\nc{\cla}{{\check\lambda}}
\nc{\cmu}{{\check\mu}}
\nc{\cnu}{{\check\nu}}
\nc{\ceta}{{\check\eta}}
\nc{\DefbE}{{\on{Def}_{\cB}(E_\cT)}}
\nc{\imathb}{{\ol{\imath}}}
\nc{\rlr}{\overset{\longrightarrow}{\underset{\longrightarrow}\longleftarrow}}
\nc{\oBun}{\overset{\circ}\Bun}
\nc{\BunBbb}{\ol{\ol{Bun}}_B}
\nc{\BunBr}{\Bun_B^{rat}}
\nc{\BunBrsg}{\Bun_B^{rat,\on{s.g.}}}
\nc{\BunBrp}{\Bun_B^{rat,polar}}
\nc{\BunBrpbg}{\Bun_B^{rat,polar,\on{b.g.}}}
\nc{\BunBrpsg}{\Bun_B^{rat,polar,\on{s.g.}}}
\nc{\BunTrp}{\Bun_T^{rat,polar}}
\nc{\BunTrpbg}{\Bun_T^{rat,polar,\on{b.g.}}}
\nc{\BunTrpsg}{\Bun_T^{rat,polar,\on{s.g.}}}
\nc{\BunNr}{\Bun_N^{rat}}
\nc{\BunNre}{\Bun_N^{enh,rat}}
\nc{\BunTr}{\Bun_T^{rat}}
\nc{\Vect}{\on{Vect}}
\nc{\Whit}{\on{Whit}}
\nc{\bTb}{\ol{\on{CT}}}
\nc{\bTr}{\on{CT}^{rat}{}}
\nc\jmathr{\jmath^{rat}{}}
\nc{\ux}{\underline{x}}
\nc{\clambda}{{\check\lambda}}
\nc{\calpha}{{\check\alpha}}
\nc{\inftyGrpd}{{\mathsf{Grpd}_\infty}}
\nc{\fset}{\mathsf{fSet}}
\nc{\LocSysG}{\LocSys_{\cG}}
\nc{\Sing}{{\on{Sing}}}
\nc{\dr}{{\on{dR}}}
\nc{\Ind}{\on{Ind}}
\nc{\Sat}{\on{Sat}}
\nc{\Ho}{\on{Ho}}
\nc{\Res}{\on{Res}}
\nc{\sotimes}{\overset{!}\otimes}
\nc{\mmod}{{\on{-}}{\mathbf{mod}}}
\nc{\Maps}{\on{Maps}}
\nc{\CMaps}{{\mathcal Maps}}
\nc{\bMaps}{{\mathbf{Maps}}}
\nc{\dgSch}{\on{DGSch}}
\nc{\dgindSch}{\on{DGindSch}}
\nc{\indSch}{\on{indSch}}
\nc{\Sch}{\mathsf{Sch}}
\nc{\affdgSch}{\on{DGSch}^{\on{aff}}}
\nc{\affSch}{\on{Sch}^{\on{aff}}}
\nc{\Groupoids}{\on{Grpd}}
\nc{\inftypic}{\infty\on{-PicGrpd}}
\nc{\inftyCat}{{\mathsf{Cat}_{\infty}}}
\nc{\MoninftyCat}{\infty\on{-Cat}^{Mon}}
\nc{\SymMoninftyCat}{\infty\on{-Cat}^{\on{SymMon}}}
\nc{\SymMonStinftyCat}{\on{DGCat}^{\on{SymMon}}}
\nc{\MonStinftyCat}{\on{DGCat}^{Mon}}
\nc{\inftystack}{\on{Stk}}
\nc{\inftystackalg}{Stk^{1\text{-}alg}}
\nc{\inftyprestack}{\on{PreStk}}
\nc{\inftydgnearstack}{\on{NearStk}}
\nc{\inftydgstack}{\on{Stk}}
\nc{\inftydgstackalg}{DGStk^{1\text{-}alg}}
\nc{\inftydgprestack}{\on{PreStk}}
\nc{\HC}{\CH\bC}
\nc{\csupp}{\supp}
\nc{\Arth}{\on{Arth}}
\nc{\ArthG}{{\on{Arth}_\cG}}
\nc{\ul}{\underline}
\nc{\Z}{\mathcal{Z}}
\nc{\calN}{\N}
\nc{\calW}{\mathcal{W}}
\nc{\calF}{\mathcal{F}}
\nc{\calH}{\mathcal{H}}
\nc{\calO}{\mathcal{O}}
\nc{\calK}{\mathcal{K}}
\nc{\Ran}{\mathsf{Ran}}
\nc{\Jets}{\on{Jets}}
\nc{\act}{\mathsf{act}}
\nc{\Av}{\mathsf{Av}}
\nc{\Ad}{\on{Ad}}
\nc{\BGRan}{BG_{\Ran}}
\nc{\colim}{\on{colim}}
\nc{\codim}{\on{codim}}
\nc{\cpt}{{\on{cpt}}}
\nc{\dR}{{\on{dR}}}
\nc{\DGCat}{\mathsf{DGCat}}
\nc{\DGCatcont}{\on{DGCat}_{cont}}
\nc{\glob}{{\on{glob}}}
\nc{\loc}{{\on{loc}}}
\renewcommand{\op}{{\on{op}}}
\nc{\pt}{{\on{pt}}}
\nc{\PreStk}{{\mathsf{PreStk}}}
\nc{\Cat}{{\mathsf{Cat}}}
\nc{\ShvCat}{{\mathsf{ShvCat}}}
\nc{\restr}[2]{\left. #1 \right |_{#2}}
\nc{\uprestr}[2]{\left. #1 \right |^{#2}}
\nc{\bLoc}{{\mathbf{Loc}}}
\nc{\bGamma}{{\mathbf{\Gamma}}}
\nc{\bLocA}{\mathbf{Loc}^\A}
\nc{\bGammaA}{\mathbf{\Gamma}^\A}
\nc{\bLocB}{\mathbf{Loc}^\B}
\nc{\bGammaB}{\mathbf{\Gamma}^\B}
\nc{\bLocH}{\mathbf{Loc}^\H}
\nc{\bGammaH}{\mathbf{\Gamma}^\H}
\nc{\gen}{\mathsf{gen}}
\nc{\hto}{\hookrightarrow}
\nc{\ext}{\mathsf{ext}}
\nc{\ev}{\mathsf{ev}}
\nc{\rat}{\mathsf{rat}}
\nc{\usotimes}[1]{\underset{#1}{\otimes}}
\nc{\ustimes}[1]{\underset{#1}{\times}}
\nc{\uscolim}[1]{\underset{#1}{\colim}}
\nc{\ch}{{\mathfrak{ch}}}
\renc{\fD}{{\Dmod}}
\nc{\fH}{{\mathfrak{H}}}
\nc{\p}{{\mathfrak{p}}}
\renc{\r}{{\mathfrak{r}}}
\nc{\xto}{\xrightarrow}
\renc{\sec}{\section}
\nc{\enh}{\mathsf{enh}}
\renc{\gen}{\mathsf{gen}}
\nc{\BunGBgen}{\Bun_G^{B-\gen}}
\nc{\BunGHgen}{\Bun_G^{H-\gen}}
\nc{\BunGNgen}{\Bun_G^{N-\gen}}
\nc{\Fun}{\mathsf{Fun}}
\nc{\End}{\mathsf{End}}
\nc{\lr}{\xymatrix{ \ar@<-0.4ex>[r] \ar@<.5ex>[l]  & } }
\nc{\rr}{\xymatrix{ \ar@<-0.2ex>[r] \ar@<.7ex>[r]  & } }
\nc{\rrr}{\xymatrix{ \ar@<.0ex>[r] \ar@<.7ex>[r] \ar@<-0.7ex>[r] & } }
\nc{\Stab}{\mathsf{Stab}}
\nc{\Orb}{\mathsf{Orb}}
\renc{\exp}{\mathit{exp}}
\renc{\q}{\mathfrak{q}}
\nc{\virg}[1]{``#1"}
\renc{\bold}[1]{\boldsymbol{#1}}
\nc{\bigt}[1]{\big( #1 \big) }
\nc{\Bigt}[1]{\Big( #1 \Big) }
\nc{\extwhit}{{\CW h}(G,\mathsf{ext})}
\nc{\footcite}{\footnote}
\nc{\GA}{{G(\AA)}}
\nc{\GO}{{G(\OO)}}
\nc{\Shv}{\mathsf{Shv}}
\nc{\inc}{\mathsf{inc}}
\nc{\Par}{\mathsf{Par}}
\renc{\i}{\mathfrak{i}}
\nc{\NA}{N(\AA)}
\nc{\VA}{V(\AA)}
\nc{\Glue}{\mathsf{Glue}}
\nc{\laxlim}{\text{laxlim}}
\nc{\FT}{\mathsf{FT}}
\nc{\out}{\mathsf{out}}
\nc{\hol}{\mathsf{hol}}
\nc{\Hol}{\on{Hol}}
\nc{\add}{\mathsf{add}}
\nc{\sto}{\rightsquigarrow}
\nc{\squigto}{\rightsquigarrow}
\nc{\fW}{\mathfrak{W}}
\nc{\vrho}{\varrho}
\nc{\counit}{\mathsf{counit}}
\nc{\unit}{\mathsf{unit}}
\nc{\corr}{\mathsf{corr}}
\nc{\Corr}{\mathsf{Corr}}
\nc{\IndSch}{\mathsf{IndSch}}
\nc{\Tate}{{\mathsf{Tate}}}
\nc{\surjto}{\twoheadrightarrow}
\renc{\j}{\mathfrak{j}}
\nc{\J}{\mathcal{J}}
\nc{\pro}{\mathsf{pro}}
\nc{\fty}{\mathsf{ft}}
\nc{\Pro}{\mathsf{Pro}}
\nc{\coact}{\mathsf{coact}}
\nc{\aff}{\mathsf{aff}}
\nc{\Nilp}{\on{Nilp}}
\nc{\Gch}{{\check{G}}}
\nc{\Pch}{{\check{P}}}
\nc{\Mch}{{\check{M}}}
\nc{\Qch}{{\check{Q}}}
\nc{\LL}{\mathbb{L}}
\nc{\LS}{{\on{LS}}}
\nc{\x}{\varkappa} 
\nc{\Otimes}{\boldsymbol{\otimes}}
\nc{\Times}{\boldsymbol{\times}}
\nc{\flip}{\text{<}}
\nc{\coeffRan}{\mathsf{coeff}^{\Ran}}
\nc{\Ha}{H(\sA)}
\nc{\Groups}{\mathsf{Groups}}
\nc{\Groth}{\mathsf{Groth}}
\nc{\rlto}{\rightleftarrows}
\nc{\DGCatRan}{\ShvCatCrys(\Ran)}
\nc{\longto}{\longrightarrow}
\renc{\Jets}{\mathsf{Jets}}
\nc{\mer}{\mathsf{mer}}
\nc{\W}{\mathcal{W}}
\nc{\Sect}{\mathsf{Sect}}
\renc{\Maps}{\mathsf{Maps}}
\renc{\bf}{\mathbf{f}}
\nc{\y}{\mathtt{y}}
\renc{\x}{\mathtt{x}}
\nc{\un}{{\it un}}
\nc{\indep}{\mathsf{indep}}
\nc{\CoAlg}{\mathsf{CoAlg}}
\nc{\coeff}{\mathsf{coeff}}
\nc{\R}{\mathcal{R}}
\renc{\hat}{\widehat}
\nc{\TK}{T(\mathsf{K})} 
\nc{\TtKK}{\Tt(\mathpzc{K})} 
\nc{\TtK}{\Tt(\mathsf{K})} 
\nc{\KK}{\mathpzc{K}}
\nc{\Dmod}{\mathfrak{D}}
\nc{\curs}[1]{\mathpzc{#1}}
\nc{\Bshv}{\bold{\B}}
\nc{\Bind}{\H_{\indep}}
\nc{\BRan}{\H_{\Ran}}
\nc{\ARan}{\A_{\Ran}}
\nc{\Aind}{\A_{\indep}}
\nc{\GrRan}{\Gr}
\nc{\Gr}{\mathsf{Gr}}
\nc{\GrGRan}{\Gr_{G}}
\nc{\GrGind}{\Gr_{G}^{\indep}}
\nc{\Grind}[1]{\Gr_{#1}^{\indep} }
\nc{\GrGdom}{\curs{Gr}_G}
\nc{\GMapsRan}[1]{\mathsf{GMaps}(X,{#1})}
\nc{\GSectRan}[1]{\mathsf{GSect}({#1}/X)}
\nc{\GMapsind}[1]{\mathsf{GMaps}(X,{#1})^\indep}
\nc{\GSectind}[1]{\mathsf{GSect}({#1}/X)^\indep}
\nc{\GMapsdom}[1]{\curs{GMaps}(X,{#1})}
\nc{\GSectdom}[1]{\curs{GSect}({#1}/X)}
\nc{\chind}{\ch^{\indep}}
\nc{\chdom}{\curs{ch}}
\nc{\QSect}[1]{\curs{QSect}(#1/X)} 
\nc{\QMaps}[1]{\curs{QMaps}(X,#1)} 
\nc{\Zar}{\mathit{Zar}}
\nc{\loccit}{\textit{loc.$\,$cit.}}
\nc{\Crys}{\on{Crys}}
\nc{\ShvCatCrys}{\ShvCat^{\Crys}}
\nc{\BPE}{{\BP E}}
\nc{\BVE}{{\BV E}}
\nc{\BBE}{{\BB E}}
\nc{\Wh}{{{\CW}h}}
\nc{\ChiralCat}{\mathsf{ChiralCat}}
\nc{\RRep}{\mathfrak{R}ep}
\nc{\SSph}{\mathfrak{S}ph}
\nc{\tto}{\twoheadrightarrow}
\nc{\disj}{{\mathsf{disj}}}
\nc{\C}{\CC}
\nc{\Tch}{{\check{T}}}
\nc{\good}{\mathsf{good}}
\nc{\triv}{\mathsf{triv}}
\nc{\Alg}{\mathsf{Alg}}
\nc{\CAlg}{\mathsf{CAlg}}
\nc{\Spread}{\mathsf{Spread}}
\nc{\Dom}{\mathsf{Dom}}
\nc{\Jac}{\on{Jac}}
\renc{\CD}[1]{{#1}^{\on{CD}}}
\nc{\String}{\on{String}}
\renc{\min}{{\mathit{min}}}
\nc{\rrep}{\on-\!\mathbf{rep}}
\nc{\WWh}{\mathfrak{W}h}
\nc{\Grpd}{\mathsf{Grpd}}
\nc{\timesdisj}{\overset{\circ}\times}
\renc{\NA}{N(\sA)}
\nc{\chiral}{\mathsf{chiral}}
\nc{\Hopf}{\mathsf{Hopf}}
\nc{\heart}{\heartsuit}
\nc{\kk}{\mathbbm{k}} 
\nc{\HHom}{\CH{om}} 
\nc{\Cone}{\on{Cone}}
\nc{\EE}{\mathbb{E}}
\renc{\HC}{{\on{HC}}}
\nc{\HH}{{\on{HH}}}
\nc{\even}{{\on{even}}}
\nc{\SingSupp}{\on{SingSupp}}
\nc{\Supp}{\on{Supp}}
\nc{\temp}{{\mathit{temp}}}
\nc{\geom}{{\mathit{geom}}}
\nc{\ren}{{\mathit{ren}}}
\nc{\naive}{{\mathit{naive}}}
\nc{\conaive}{{\mathit{conaive}}}
\nc{\spec}{\mathit{spec}}
\nc{\gch}{\mathfrak{\check{g}}}
\nc{\Hecke}{\on{Hecke}}
\nc{\LSGch}{{\LS_\Gch}}
\nc{\Hsx}[2]{\H_{{#1} \leftarrow {#2}}}
\nc{\Hdx}[2]{\H_{{#1} \to {#2}}}
\nc{\Hcorr}[3]{ \H_{{#1} \leftarrow {#2} \to {#3}} }
\nc{\Hopcorr}[3]{ \H_{{#1} \to {#2} \leftto {#3}} }
\nc{\ICohsx}[2]{\ICohW_{{#1} \leftarrow {#2}}}
\nc{\ICohdx}[2]{\ICohW_{{#1} \to {#2}}}
\nc{\ICohcorr}[3]{ \ICohW_{{#1} \leftarrow {#2} \to {#3}} }
\nc{\ICohopcorr}[3]{ \ICohW_{{#1} \to {#2} \leftto {#3}} }
\nc{\QCohsx}[2]{\QCohW_{{#1} \leftarrow {#2}}}
\nc{\QCohdx}[2]{\QCohW_{{#1} \to {#2}}}
\nc{\QCohcorr}[3]{ \QCohW_{{#1} \leftarrow {#2} \to {#3}} }
\nc{\QCohopcorr}[3]{ \QCohW_{{#1} \to {#2} \leftto {#3}} }
\renc{\AA}{\bbA}
\nc{\Asx}[2]{\AA_{{#1} \leftarrow {#2}}}
\nc{\Adx}[2]{\AA_{{#1} \to {#2}}}
\nc{\Acorr}[3]{ \AA_{{#1} \leftarrow {#2} \to {#3}} }
\nc{\Aopcorr}[3]{ \AA_{{#1} \to {#2} \leftto {#3}} }
\nc{\Bsx}[2]{\B_{{#1} \leftarrow {#2}}}
\nc{\Bdx}[2]{\B_{{#1} \to {#2}}}
\nc{\Bcorr}[3]{ \B_{{#1} \leftarrow {#2} \to {#3}} }
\nc{\Bopcorr}[3]{ \B_{{#1} \to {#2} \leftto {#3}} }
\nc{\ICohzero}[3]{\ICoh_0 \bigt{#1 \times_{{#2}_\dR} #3}}
\nc{\IndCohzero}{\ICohzero}
\nc{\form}[3]{#1 \times_{{#2}_\dR} #3 }
\nc{\ind}{{\mathsf{ind}}}
\nc{\oblv}{{\mathsf{oblv}}}
\nc{\Aff}{\mathsf{Aff}}
\nc{\dgAff}{\Aff}
\nc{\deloop}{\mathsf{deloop}}
\renc{\loop}{\mathsf{loop}}
\nc{\coev}{\mathsf{coev}}
\nc{\bE}{\mathbf{E}}
\nc{\ShvCatH}{{\ShvCat^{\bbH}}}
\nc{\ShvCatQW}{\ShvCat^{\QCohW}}
\nc{\bbimod}{\on{-}\mathbf{bimod}}
\nc{\Tw}{\mathsf{Tw}}
\nc{\Arr}{\mathsf{Arr}}
\nc{\bDelta}{\bold\Delta}
\nc{\BiCat}{\mathsf{BiCat}}
\nc{\Seg}{\mathsf{Seg}}
\nc{\Cart}{\mathsf{Cart}}
\nc{\Bimod}{\mathsf{Bimod}}
\nc{\lax}{\mathit{lax}}
\nc{\pr}{\mathsf{pr}}
\nc{\zero}{ \{ 0 \}   }
\nc{\Perf}{\mathsf{Perf}}
\nc{\leftto}{\leftarrow}
\nc{\lto}{\leftto}
\nc{\xlto}[1]{\xleftarrow{#1}}
\nc{\ltemp}{{}^\temp}
\nc{\TwCorr}{\mathsf{TwCorr}}
\nc{\Affover}[1]{{\Aff_{/#1}}}
\nc{\Affoverop}[1]{{( \Affover{#1})^\op}}
\nc{\AffOver}[2]{{(\Aff_{#1})_{/#2}}}
\nc{\AffOverop}[2]{{( \AffOver{#1}{#2})^\op}}
\nc{\aft}{{\mathit{aft}}}
\renc{\vert}{{\mathit{vert}}}
\nc{\horiz}{{\mathit{horiz}}}
\nc{\type}{{\mathit{type}}}
\nc{\adm}{{\mathit{adm}}}
\nc{\g}{\mathfrak{g}}
\nc{\free}{\mathsf{free}}
\nc{\Sform}{{S \times_{S_\dR} S}}
\nc{\Yform}{{\Y \times_{\Y_\dR} \Y}}
\nc{\SdR}{ {S_{\dR}}}
\nc{\laft}{{\mathit{laft}}}
\nc{\Affevcocaft}{\Aff_{\aft}^{< \infty}}
\nc{\Affaftevcoc}{\Aff_{\aft}^{< \infty}}
\nc{\Affevcoclfp}{\Aff_{\lfp}^{< \infty}}
\nc{\Schevcoclfp }{\Sch_{\lfp}^{< \infty}}
\nc{\Schevcocaft}{\Sch_{\aft}^{< \infty}}
\nc{\Schaftevcoc}{\Sch_{\aft}^{< \infty}}
\nc{\Stkevcoc}{\Stk^{< \infty}}
\nc{\Stkevcoclfp}{\Stk_{\lfp}^{< \infty}}
\nc{\Stkperfevcoclfp}{\Stk_{\mathit{perf},\lfp}^{< \infty}}
\nc{\Stkperflfp}{\Stk_{\mathit{perf},\lfp}}
\nc{\Stklfp}{\Stk_{\lfp}}
\nc{\evcoc}{\mathit{e.c.}}
\nc{\ICoh}{\IndCoh}
\nc{\citep}{\cite}
\renc{\H}{\bbH}
\nc{\uno}{\mathbbm{1}}
\nc{\CohBig}{{\Coh^{-\infty}}}
\nc{\Tang}{\mathbb{T}}
\nc{\LieAlg}{\mathsf{LieAlg}}
\nc{\Serre}{{\on{Serre}}}
\nc{\MPreStk}{\mathsf{MPreStk}}
\nc{\all}{{\on{all}}}
\nc{\QCohwedge}{\bbQ^\wedge}
\nc{\ICohwedge}{\bbI^\wedge}
\nc{\ICohW}{\ICohwedge}
\nc{\QCohW}{\QCohwedge}
\nc{\ShvCatA}{\ShvCat^{\AA}}
\nc{\ShvCatB}{{\ShvCat^\B}}
\nc{\naiveto}{{\xto{\naive}}}
\nc{\conaiveto}{{\xto{\conaive}}}
\nc{\strong}{\mathit{strong}}
\nc{\costrong}{\mathit{costrong}}
\nc{\conv}{\mathit{conv}}
\nc{\Q}{\bbQ}
\nc{\bY}{\mathbf{Y}}
\nc{\Loop}{\mathsf{LOOP}}
\nc{\DG}{{\on{DG}}}
\nc{\coind}{\mathsf{coind}}
\nc{\co}{\on{co}}
\nc{\laftdef}{{\mathit{laft-def}}}
\nc{\qsmooth}{{\mathit{qs.smooth}}}
\nc{\smooth}{{\mathit{smooth}}}
\nc{\LKE}{\on{LKE}}
\nc{\RKE}{\on{RKE}}
\nc{\ShvCatAco}{\ShvCatA_{\co}}
\nc{\ShvCatHco}{\ShvCatH_{\co}}
\nc{\Stk}{\mathsf{Stk}}
\nc{\doubleCat}{\mathsf{doubleCat}}
\nc{\Spaces}{\on{Spc}}
\nc{\ALG}{\mathsf{ALG}}
\nc{\MAPS}{\mathsf{MAPS}}
\nc{\CAT}{\mathsf{CAT}}
\nc{\oneCat}{{\Cat_{\1}}}
\nc{\oneCAT}{{\CAT_{\1}}}
\nc{\twoCat}{{\Cat_{\2}}}
\nc{\twoCAT}{{\CAT_{\2}}}
\nc{\DGCAT}{\mathsf{DGCAT}}
\nc{\twoCatDG}{{\CAT_{\2}^\DG}}
\nc{\twoCATDG}{{\CAT_{\2}^\DG}}
\nc{\twoCATDGw}{{\CAT_{\2, w*}^\DG}}
\nc{\twoCATDGww}{{\CAT_{\2, ww*}^\DG}}
\nc{\AlgBimod}{\Alg^{\mathit{bimod}}}
\nc{\AlgBimodDGCat}{\AlgBimod(\DGCat)}
\nc{\ALGBimod}{\ALG^{\mathit{bimod}}}
\nc{\twoAlgBimod}{\ALGBimod}
\nc{\rev}{{\on{rev}}}
\nc{\lfp}{{\mathit{lfp}}}
\nc{\RBeck}{{\on{R-BC}}}
\nc{\LBeck}{{\on{L-BC}}}
\nc{\schem}{\mathit{schem}}
\nc{\proper}{\mathit{proper}}
\nc{\res}{{\mathit{res}}}
\nc{\UQCoh}{\U^{\QCoh}}
\nc{\UQ}{\UQCoh}
\nc{\LieAlgbd}{{\on{Lie-algbd}}}
\nc{\LY}{{L\Y}}
\nc{\TangQ}{\Tang^{\QCoh}}
\nc{\Fil}{{\on{Fil}}}
\nc{\AssGr}{\on{assoc-gr}}
\nc{\red}{{\mathit{red}}}
\nc{\Cech}{\on{Cech}}
\nc{\FormMod}{\mathsf{FormMod}}
\nc{\FormModunderStkevcoc} {\FormMod_{\Stk^{< \infty}/}^\lfp }
\nc{\vDmod}{\virg{\Dmod}}
\nc{\Betti}{{\on{Betti}}}
\nc{\LSG}{\LS_G}
\nc{\PP}{\mathbb{P}}
\nc{\Tr}{{\T\!\on r}}
\nc{\Bord}{\on{Bord}}
\nc{\Bordfr}{\on{Bord}^{\mathsf{fr}}}
\nc{\Bordor}{\on{Bord}^{\mathsf{or}}}
\nc{\hleftto}{\hookleftarrow}
\nc{\hlefto}{\hleftto}
\nc{\longleftto}{\longleftarrow}
\nc{\longlefto}{\longleftto}
\nc{\ICOHFORM}[2]
{
\ICoh_0
\Bigt{
\bigt{
#2
}
^\wedge_{#1}
}
}
\nc{\ICOHTOP}[2]
{
\ICoh_0
\Bigt{
\bigt{
Y^{#2}
}
^\wedge_{Y^{#1}}
}
}
\nc{\partialin}{\partial_{\mathit{in}}}
\nc{\partialout}{\partial_{\mathit{out}}}
\nc{\pin}{\partialin}
\nc{\pout}{\partialout}
\nc{\fE}{{f \! E}}
\nc{\Cob}{\on{Cob}}
\nc{\Cobor}{\on{Cob}^{\mathsf{or}}}
\nc{\RR}{\mathbb{R}}
\title[Chiral homology of the spherical category]{The topological chiral homology of the spherical category}
\author{Dario Beraldo}
\begin{document}
\maketitle

\begin{abstract}
We consider the spherical DG category $\Sph_G$ attached to an affine algebraic group $G$. By definition, $\Sph_G := \ICoh(\LS_G(S^2))$ consists of ind-coherent sheaves on the (derived) stack of $G$-local systems on the $2$-sphere $S^2$. The $3$-dimensional version of the pair of pants endows $\Sph_G$ with an $E_3$-monoidal structure.  
More generally, for an algebraic stack $Y$ and $n \geq -1$, we consider the $E_{n+1}$-monoidal DG category $\Sph(Y,n) := \ICoh_0((Y^{S^n})^\wedge_Y)$, where $\ICoh_0$ is the sheaf theory introduced by Arinkin and Gaitsgory.
The case of $\Sph_G$ is recovered by setting $Y =BG$ and $n=2$.

The cobordism hypothesis associates to $\Sph(Y,n)$ an $(n+1)$-dimensional topological field theory, whose value on a manifold $M^d$ of dimension $d \leq n+1$ (possibly with boundary) is given by the \emph{topological chiral homology} $\int_{M^d} \Sph(Y,n)$.
In this paper, we compute such chiral homology, obtaining the Stokes style formula
$$
\int_{M^d} \Sph(Y,n)
\simeq
\ICOHTOP{M^d}{\partial(M^d \times D^{n+1-d})},
$$
where the formal completion is constructed using the obvious projection $\partial(M^d \times D^{n+1-d}) \to M^d$. 
The most interesting instance of this formula is for $\Sph_G \simeq \Sph(BG,2)$, the original spherical category, and $X$ a Riemann surface. In this case, we obtain a monoidal equivalence $\int_X \Sph_G \simeq \H(\LSG^{\Betti}(X))$, where $\LSG^{\Betti}(X)$ is the stack of $G$-local systems on the topological space underlying $X$ and $\H$ is a sheaf theory related to Hochschild cochains.
\end{abstract}

\sec{Introduction}

\ssec{The main result}

Let $\kk$ be a field of characteristic zero. For $G$ an affine algebraic group over $\kk$, with Lie algebra $\g$, consider the \emph{spherical (or Satake) DG category} 
$$
\Sph_G
:=
\ICoh (BG \times_{\g/G} BG),
$$
presented in this form in \cite{AG}. To be precise on the terminology, $\Sph_G$ is called the \virg{renormalized spherical category} in \cite{AG}, and denoted there by $\Sph_G^\ren$. A slighly different form of $\Sph_G$ appeared earlier in \cite{BF}.

\medskip

The goal of this article is to \emph{integrate} this DG category over a Riemann surface. Let us explain what we mean by this.

\sssec{}

One checks that there is an equivalence $\Sph_G \simeq (\Sym(\g[-2]) \mod)^G$ of plain DG categories. However, $\Sph_G$ admits an $E_3$-monoidal structure which becomes evident in the realization
\begin{equation} \label{eqn:Sph_G written using S2}
\Sph_G 
\simeq
\ICoh
\bigt{
(BG^{S^2})^\wedge_{BG}
},
\end{equation}
where we have used the obvious isomorphisms
$$
BG \times_{\g/G} BG
\simeq
BG \times_{G/G} BG
\simeq
BG \times_{LBG} BG
\simeq
BG^{S^2}
\simeq
(BG^{S^2})^\wedge_{BG}.
$$
The $E_3$-structure comes from the $3$-dimensional version of the pair of pants contruction (or better from the space of configurations of little $3$-disks in a $3$-dimensional space), together with the functoriality of $\ICoh$ on formal completions (see \cite{book}).

\sssec{}

As explained in \cite{HA}, \cite{cobordism-hyp}, \cite{fact-hom}, any\footnote{
More precisely: $\A$ needs to be \emph{$SO(n+1)$-invariantly} $E_{n+1}$-monoidal. Otherwise, its topological chiral homology is only defined on $(n+1)$-framed manifolds. Luckily, $\Sph_G$ is $SO(3)$-invariantly $E_3$-monoidal, and likewise for all its generalizations $\Sph(Y,n)$ to be introduced later on.} 
$E_{n+1}$-monoidal DG category $\A$ can be \emph{integrated} on a closed oriented manifold $M$ of dimension $d \leq n+1$: the result will be an $E_{n+1-d}$-monoidal DG category, called the \emph{topological chiral homology of $\A$} and denoted by $\int_M \A$. 

In particular, it makes sense to compute the topological chiral homology of $\Sph_G$ on a Riemann surface $X$. The result will be a monoidal DG category, whose explicit calculation is the subject of our main result:

\begin{thm} \label{main-thm-intro}
For any affine algebraic group $G$ over $\kk$ and any Riemann surface $X$, there is a natural monoidal equivalence
\begin{equation} \label{main-formula-intro}
\int_{X}
\Sph_G
\simeq
\H(\LSG^{\Betti}(X)),
\end{equation}
where $\LSG^{\Betti}(X)$ is the derived stack of $G$-local systems on the topological space underlying $X$, and 
$$
\H(Y) := (\ICoh_0((Y \times Y)^\wedge_Y) ,\star)
$$
is the monoidal DG category introduced in \cite{centerH}.
\end{thm}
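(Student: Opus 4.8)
The plan is to deduce Theorem~\ref{main-thm-intro} as the special case $Y = BG$, $n = 2$, $M^d = X$ of the general Stokes formula
$$
\int_{M^d} \Sph(Y,n) \;\simeq\; \ICOHTOP{M^d}{\partial(M^d \times D^{n+1-d})}
$$
(the main computation of this paper), using $\Sph_G \simeq \Sph(BG,2)$, and then to match the two sides. A compact Riemann surface is a closed oriented surface, so $d = 2$; the complementary disk has dimension $n+1-d = 1$, and since $X$ is closed $\partial(X \times D^1) = X \times S^0 = X \sqcup X$, with structural projection the fold map $X \sqcup X \to X$. For $Y = BG$ the mapping stack $BG^{K} = \Maps(K, BG)$ computes $G$-local systems on $K$, so $BG^{X} \simeq \LSG^{\Betti}(X) =: \mathcal{L}$ and $BG^{X \sqcup X} \simeq \mathcal{L} \times \mathcal{L}$, and the fold map induces the diagonal $\mathcal{L} \to \mathcal{L} \times \mathcal{L}$. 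Thus the formal completion is taken along this diagonal, and $\int_X \Sph_G \simeq \ICOHFORM{\mathcal{L}}{\mathcal{L} \times \mathcal{L}}$, which is $\H(\mathcal{L}) = \H(\LSG^{\Betti}(X))$ by the definition $\H(Y) = (\ICoh_0((Y\times Y)^\wedge_Y), \star)$ of \cite{centerH}.

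The substance is the general formula, which I would establish as follows. The $E_{n+1}$-monoidal category $\Sph(Y,n) = \ICoh_0((Y^{S^n})^\wedge_Y)$ is the image, under the symmetric monoidal sheaf theory $\ICoh_0$ of Arinkin--Gaitsgory, of the geometric $E_{n+1}$-algebra object $(Y^{S^n})^\wedge_Y$, whose multiplication is the $(n+1)$-dimensional pair-of-pants correspondence built on $S^n = \partial D^{n+1}$. Because $\int_{M^d}$ is a colimit over disks embedded in $M^d$ and $\ICoh_0$ is symmetric monoidal, the two should commute, reducing the problem to a geometric computation:
$$
\int_{M^d} \Sph(Y,n) \;\simeq\; \ICoh_0\Bigt{ \int_{M^d}^{\geom} (Y^{S^n})^\wedge_Y } .
$$
The remaining geometric factorization homology I would evaluate by a nonabelian Poincar\'e duality / scanning argument: integrating the boundary-of-a-disk object $(Y^{S^n})^\wedge_Y$ over $M^d$ fills the $n+1-d$ missing directions with a disk and returns maps out of $\partial(M^d \times D^{n+1-d})$ (which equals $M^d \times S^{n-d}$ for closed $M^d$), the formal completion being inherited from the constant maps $Y^{M^d} \to Y^{\partial(M^d \times D^{n+1-d})}$. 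This produces the Stokes formula.

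I expect the main obstacle to lie in the general formula rather than in its specialization. First, one must make precise that $\ICoh_0$ is symmetric monoidal for the relevant correspondence and formal-completion structures, compatibly with the $E_{n+1}$-coherences and with the colimits defining $\int_{M^d}$, so that factorization homology genuinely commutes with it; this is where the functoriality of $\ICoh_0$ on formal completions (\cite{book}) and the control of singular support are essential. Second, the geometric duality step must be carried out in the derived, formally completed setting while tracking the residual $E_{n+1-d}$-monoidal structure. For Theorem~\ref{main-thm-intro} this residual structure is $E_1$, arising from gluing two copies of the cylinder $X \times D^1$ along a shared end $X$; under the identifications above this is exactly the convolution product $\star$ of $\H(\mathcal{L})$ over the middle copy of $\mathcal{L}$. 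Upgrading the equivalence of underlying categories to a matching of these $E_1$-algebra structures is, I expect, the most delicate point.
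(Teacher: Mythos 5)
Your specialization step is fine and agrees with the paper's own reduction (Section 3.3.3): for closed $X$ one has $\partial(X\times D^1)=X\sqcup X$, the fold map induces the diagonal of $\mathcal{L}:=\LSG^{\Betti}(X)$, and $\ICoh_0\bigt{(\mathcal{L}\times\mathcal{L})^\wedge_{\mathcal{L}}}$ is $\H(\mathcal{L})$ by definition. The gap is in how you propose to obtain the general Stokes formula. The assertion that \virg{because $\int_{M^d}$ is a colimit over disks and $\ICoh_0$ is symmetric monoidal, the two should commute} is exactly the hard point, and symmetric monoidality does not deliver it. Unwound, this commutation says that $\ICoh_0$ converts the limits of (formal completions of) mapping stacks arising from a decomposition $M\sqcup_B N$ into relative tensor products of DG categories, i.e.\ excision: $\M\otimes_{\B}\N \simeq \ICoh_0$ of the glued object, where the monoidal category $\B$ acts on $\M$ and $\N$ by \emph{convolution} (pull-push along correspondences), not by pullback along a map. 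Precisely for this reason no descent statement for $\ICoh_0$ applies off the shelf --- the paper says this explicitly at the start of Section 4 --- and the comparison functor out of the bar construction is not an equivalence for formal reasons. The missing idea, which is the technical core of the paper (Theorem 4.1), is a rigidification: one shows that the $\B$-actions on $\M$ and $\N$ restrict, along the monoidal functor $(\QCoh(Y^B),\otimes)\to(\B,\star)$, to actions induced by honest pullbacks $\QCoh(Y^B)\to\QCoh(Y^M)\to\M$; one then rewrites $\M\otimes_\B\N$ as a bar construction in $\QCoh(Y^B)\mmod$, identifies each simplex with $\ICoh_0$ of a stage of a Cech coresolution of spaces, and only then invokes descent of $\ICoh_0$ in the second variable. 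Your proposal contains no substitute for this mechanism.

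A second, related problem: your intermediate object $\int^{\geom}_{M^d}(Y^{S^n})^\wedge_Y$ is not a defined thing. Nonabelian Poincar\'e duality and scanning are statements about mapping \emph{spaces}; their analogue for formal completions of derived mapping stacks is exactly the content in question, not a citable tool (mapping stacks turn colimits of spaces into \emph{limits} of stacks, so the geometric side of your commutation has no colimit to compute). Note also that the full Claim 3.1 is deliberately left unproven in the paper because of the $(\infty,n+1)$-categorical coherences it requires; for surfaces the paper sidesteps this, following the argument of \cite{BZGN}, by producing two symmetric monoidal functors out of $\Bord_{1,2}$ --- the genuine TFT $\bT_n$ and the assignment $\bT'_n$ given by the $\ICoh_0$ formula, which excision (Theorem 4.1) makes into an actual functor and the tensor-product theorem for $\ICoh_0$ makes symmetric monoidal --- and checking they agree on $D^2$ and the pair of pants, hence everywhere. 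That same mechanism is what resolves the point you flag as most delicate (matching the $E_1$-structures); without it you would have to build all those coherences by hand.
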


\begin{rem}
This theorem can be regarded as a topological instance of the main result of \cite{Nick-thesis}.
\end{rem}

\begin{rem} \label{rem: naive}
Instead of the spherical category $\Sph_G$, one might consider its simpler version $\QCoh(\pt/G)\simeq \Rep_G$, obtained by discarding the derived structure. Of course, $\Rep_G$ is symmetric monoidal. However, to keep the analogy with the above, let us just consider the underlying $E_3$-structure.  By \cite{BFN}, there is a monoidal equivalence:
\begin{equation} \label{naive formula}
\int_X \Rep_G
\simeq
\QCoh(\LSG^{\Betti}(X)).
\end{equation}
Note that $\Sph_G$ is a refinement of $\Rep_G$ (in that there is a monoidal map $\Rep_G \to \Sph_G$ that generates the target under colimits) and similarly $\H(Y)$ is a refinement of $\QCoh(Y)$ (again, there is a monoidal functor $\QCoh(Y) \to \H(Y)$ that generates the target under colimits, see \cite{centerH}). Thus, we may regard \eqref{main-formula-intro} as a refinement of \eqref{naive formula}. 
\end{rem}

Our interest in Theorem \ref{main-thm-intro} comes from its interplay with two different topics. The first topic, mentioned next, is the geometric Langlands program. The second one, discussed in Section \ref{sssec:higher Dmodules}, is the theory of shifted symplectic stacks and their quantizations (after \cite{PTVV}, \citep{CPTVV}).

\ssec{Langlands motivation}

\sssec{}

In this section only, we assume that $G$ is connected and reductive, with Langlands dual $\Gch$. 
Recall the rough statement of the Betti geometric Langlands conjecture (see \cite{Betti} for a thorough discussion): for any smooth complete curve $X$, there is an equivalence
$$
\LL^\Betti_G:
\ICoh_\N(\LS_{\Gch}^{\Betti}(X))
\xto{\; \; \simeq ? \; \; }
\Dmod^{\Betti}(\Bun_G(X)).
$$
\sssec{}

Now, as explained in \cite{centerH} and \cite{shvcatHH}, there is a tautological \virg{Hochschild} action of $\H(\LS_{\Gch}^{\Betti}(\Sigma))$ on $\ICoh_\N(\LS_{\Gch}^{\Betti}(\Sigma))$. Combining this with the above conjecture, we expect that $\H(\LS_{\Gch}^{\Betti}(\Sigma))$ acts on $\Dmod^{\Betti}(\Bun_G(\Sigma))$ as well, in a natural way (via the \virg{Hecke} action).

\medskip

To construct such action, we render the strategy outlined in \cite[Introduction]{shvcatHH} to the much easier Betti case. More precisely, we combine three ingredients: 
\begin{itemize}
\item
derived geometric Satake, see \cite{BF} and \cite{AG} for the version we will use;
\item
our main Theorem \ref{main-thm-intro};
\item
the main result of \cite{Nadler-Yun}, which, starting from \eqref{naive formula}, constructs an action of $\QCoh(\LSGch)$ on $\Dmod^{\Betti}(\Bun_G(\Sigma))$.
\end{itemize}
Details will be provided in the sequel to this paper.

\sssec{}

The action of $\H(\LS_{\Gch}^{\Betti}(X))$ on $\Dmod^{\Betti}(\Bun_G(X))$ brings several new tools to the Betti geometric Langlands program: the notion of singular support over $\LSGch$ for objects of $\Dmod^{\Betti}(\Bun_G(X))$, the notion of categorified Eisenstein series \cite{shvcatHH}, the \emph{strong spectral gluing theorem} \cite{gluing}, and so on. We hope to exploit these tools to give an explicit construction of the conjectural functor $\LL_G^\Betti$.

\ssec{Structure of the paper}

Section \ref{sec:background} is devoted to recalling some background notions.
In Section \ref{sec:TFT}, we extend the statement of Theorem \ref{main-thm-intro} to higher dimensional spheres and stacks other than $BG$. We explain how any of these generalized spherical categories yields a topological field theory (TFT).  Next, in Section \ref{sec:excision}, we prove our main technical result, Theorem \ref{thm:gluing along boundary}, which guarantes that $\ICoh_0$ satisfies the \emph{excision property}.
Finally, in Sections \ref{sec:spheres} and \ref{sec:Riemann-surfaces}, we exploit such property to compute the value of our TFT on various manifolds: spheres, tori, pairs of pants in various dimension, Riemann surfaces.

\ssec{Acknowledgements}
I am grateful to Nick Rozenblyum and Yakov Varshavsky for a stimulating conversation (Paris, January 2018) that triggered the writing of this paper.
I would also like to thank Clark Barwick, Andre Henriques, Claudia Scheimbauer and the anonymous referee for their generous help.
Research partially supported by EPSRC programme grant EP/M024830/1 Symmetries and Correspondences.

\sec{Notation and background} \label{sec:background}

Let us collect here some of the notation and background that will be used without further mention in the main body of the paper.

\ssec{Topology}

\sssec{}

We denote by $\Spaces$ the $\infty$-category of topological spaces as in \cite{HTT}. 
We say that a space $S \in \Spaces$ is \emph{finite} if it has the homotopy type of a finite simplical complex. We denote by $D^{k}$ and $S^k$ disks and spheres as usual.

\sssec{}
When we say that $S_1, S_2 \in \Spaces$ are isomorphic (written $S_1 \simeq S_2$), we actually mean that they are \virg{weak equivalent}.

\sssec{}

Manifolds are always assumed to be smooth. For a (smooth) manifold $M$, we denote by $\partial M$ its boundary. A manifold $M$ is said to be \emph{closed} if $\partial M = \emptyset$.
When gluing two manifolds along a common boundary, we always assume that the gluing has been performed in such a way that the result is also a manifold.

\sssec{}

We use the following notation: for an $n$-dimensional manifold $M$ possibly with boundary, we denote by $M^\circ$ the same manifold with a small $n$-dimensional disk removed from its interior. Similarly, the notation $M^{\circ\circ}$ means that we have removed two disjoint disks.
For instance, $(D^2)^{\circ \circ}$ is the usual pair of pants.

\ssec{Formal algebraic geometry}

We make essential use of the notion of formal completion in derived algebraic geometry and of the theory of ind-coherent sheaves.
For the former, we refer to \cite{book}, for the latter to \cite{ICoh} and \cite{book}.
Our notation follows those treatments, as well as \cite{centerH} and \cite{shvcatHH}.

\sssec{}

We always work over a field $\kk$ of characteristic zero.
All stacks in this paper are defined over $\kk$, and they are assumed to be derived, algebraic, quasi-compact, with affine diagonal and with perfect cotangent complex. 
We say that a stack is \virg{bounded} if it is \virg{eventually coconnective} in the terminology of \cite{book}. Recall that any quasi-smooth stack is bounded.

Even if we start with a smooth (in particular, underived) stack, the operations performed in this paper will quickly lead to unbounded (in particular, derived) stacks. For instance, the operation $Y \squigto LY$ of taking loops sends smooth stacks to quasi-smooth stacks and (genuinely) quasi-smooth stacks to unbounded stacks.

\sssec{}

We denote by $Z^\wedge_{Y}$ the formal completion of a map of stacks $f: Y \to Z$. Observe that $f$ is not required to be a closed embedding, and that the notation $Z^\wedge_Y$ is abusive as the map $f$ is not mentioned. When we suspect  that $f$ might not clear from the context, we write it down explicitly.

We assume familiarity with the theory of ind-coherents sheaves on schemes, stacks and formal completions thereof: see \cite{ICoh}, \cite{book}.

\sssec{}

A key player in this paper is the hybrid sheaf theory $\ICoh_0$ (a mixture between $\ICoh$ and $\QCoh$) on maps of stacks, introduced in \cite{AG2}. If $Y \to Z$ is a map of stacks with $Y$ bounded, then 
$$
\ICoh_0(Y \to Z)
=: \ICoh_0(Z^\wedge_Y)
$$
is defined as the full subcategory of $\ICoh(Z^\wedge_Y)$ given by the fiber product
$$
\ICoh_0(Z^\wedge_Y)
:=
\ICoh(Z^\wedge_Y) \ustimes{\ICoh(Y)} \QCoh(Y).
$$
On the other hand, if $Y$ is not bounded, such fiber product is not to be considered, and the definition of $\ICoh_0(Z^\wedge_Y)$ must be modified as described in \cite{centerH}.

\begin{example}

In particular, on any unbouded $Y$ there are two different DG categories of $\fD$-modules: the usual one $\Dmod(Y)$, see \cite{book}, and the more exotic one
$$
\vDmod(Y)
:=
\ICoh_0(\pt^\wedge_Y),
$$
introduced in \cite{centerH}. Any category of $\fD$-modules occurring in this paper must be interpreted as the exotic one.

\end{example}

\sssec{} \label{sssec:functoriality of ICOH0}

The assignment $\ICoh_0(Y \to Z)$ upgrades to a functor out of correspondences of arrows of stacks. Any diagram
$$ 
\begin{tikzpicture}[scale=1.5]
\node (00) at (0,0) {$Y_1$}; 
\node (10) at (1.5,0) {$Y_2$}; 
\node (20) at (3,0) {$Y_2$};
\node (01) at (0,.8)  {$Z_1$};
\node (11) at (1.5,.8) {$Z'$};
\node (21) at (3,.8) {$Z_2$};
\path[-> ,font=\scriptsize,>=angle 90]
(11.east) edge node[below] {$ $} (21.west);
\path[-> ,font=\scriptsize,>=angle 90]
(10.east) edge node[above] {$\simeq$} (20.west);
\path[ <-,font=\scriptsize,>=angle 90]
(01.east) edge node[below] {$ $} (11.west);
\path[ <-,font=\scriptsize,>=angle 90]
(00.east) edge node[below] {$ $} (10.west);
\path[<-,font=\scriptsize,>=angle 90]
(01.south) edge node[right] {} (00.north);
\path[<-,font=\scriptsize,>=angle 90]
(11.south) edge node[below] {$ $} (10.north);
\path[<-,font=\scriptsize,>=angle 90]
(21.south) edge node[below] {$ $} (20.north);
\end{tikzpicture}
$$
\emph{with cartesian left square} gives rise to a pull-push functor
$$
\ICoh_0(Y_1 \to Z_1)
\longto
\ICoh_0(Y_2 \to Z_2),
$$
see \cite{centerH}.

\begin{rem}
In the main body of the paper we will use the notation $\ICoh_0(Z^\wedge_Y)$ in place of the more precise $\ICoh_0(Y \to Z)$.
\end{rem}

\ssec{Mixing topology and algebraic geometry}

\sssec{}

The $\infty$-category of stacks is cotensored over finite spaces. This means that, for $Y$ a stack and $S$ a finite space, there exists a well defined $Y^S := \Maps(S, Y)$, which can be computed as an iterated fiber product of various copies of $Y$. For instance, 
$$
Y^{S^1} = LY = Y \times_{Y \times Y} Y, 
\hspace{.4cm}
Y^{S^2} = Y \times_{LY} Y.
$$ 

\sssec{}

It is already clear from the introduction that our DG categories of interest arise via a $3$-step procedure: start with a map of finite spaces, take $\Maps$ into a stack $Y$, take $\ICoh_0$ of the result. Diagrammatically, 
$$
[M \to N]
\squigto 
[Y^N \to Y^M]
\squigto
\ICOHTOP{N}{M}.
$$

\begin{rem}
It is often the case that $M$ and $N$ are manifolds. We emphasize that the resulting $\ICOHTOP{N}{M}$ depends only on the topological spaces underlying $M$ and $N$.
\end{rem}

\sssec{}

It follows from Section \ref{sssec:functoriality of ICOH0} that any commutative diagram of finite spaces
\begin{equation} \label{diag:cospans compatible}
\begin{tikzpicture}[scale=1.5]
\node (00) at (0,0) {$N_1$}; 
\node (10) at (2,0) {$N_2$}; 
\node (20) at (4,0) {$N_2$};
\node (01) at (0,1)  {$M_1$};
\node (11) at (2,1) {$M'$};
\node (21) at (4,1) {$M_2$};
\path[<-  ,font=\scriptsize,>=angle 90]
(11.east) edge node[below] {$ $} (21.west);
\path[<- ,font=\scriptsize,>=angle 90]
(10.east) edge node[above] {$\simeq$} (20.west);
\path[ ->,font=\scriptsize,>=angle 90]
(01.east) edge node[below] {$ $} (11.west);
\path[ ->,font=\scriptsize,>=angle 90]
(00.east) edge node[below] {$ $} (10.west);
\path[->,font=\scriptsize,>=angle 90]
(01.south) edge node[right] {} (00.north);
\path[->,font=\scriptsize,>=angle 90]
(11.south) edge node[below] {$ $} (10.north);
\path[->,font=\scriptsize,>=angle 90]
(21.south) edge node[below] {$ $} (20.north);
\end{tikzpicture}
\end{equation}
with \emph{cocartesian leftmost diagram} gives rise to a pull-push functor
$$
\ICOHTOP {N_1}{M_1}
\longto
\ICOHTOP {N_2}{M'}
\longto
\ICOHTOP {N_2}{M_2}.
$$

\sssec{}

Luckily, all diagrams (\ref{diag:cospans compatible}) of topological spaces arising in this paper are of this form: with cocartesian leftmost square and with bottom rightmost map an isomorphism (of spaces). We simply call these diagrams \emph{pairs of compatible cospans}.

\sec{The TFT perspective} \label{sec:TFT}

It turns out that the statement of Theorem \ref{main-thm-intro} can be generalized with no extra effort, using the language of topological field theories (TFTs). Such generalization is actually both significant and helpful.

It is significant, as it allow to reinterpret the results of this paper via the theory of shifted symplectic stacks and their quantizations.

It is helpful, as it clarifies the structures involved in the proof of the theorem.\footnote{
For instance, in the case $n=2$ several circles appear for various different reasons and it is easy to confuse their roles. On the other hand, for higher $n$, some of the circles are replaced by $(n-1)$-dimensional spheres and less confusion is likely to arise: see for instance Corollary \ref{cor:chiral-S1}.
}

\medskip

In this section, we introduce $\Sph(Y,n)$, our generalization of $\Sph_G$. We then argue that it determines a fully extended $(n+1)$-dimensional TFT (in the sense of \cite{cobordism-hyp}) and describe such TFT explicitly in \virg{Stokes' terms}. 

\begin{rem}
Ideally, TFTs would associate numbers to top dimensional manifolds. We stress that our TFT is not of this sort: its value on a manifold of top(=$n+1$) dimension is a DG category. In general, $\Sph(Y,n)$ is not dualizable enough to yield a TFT that associates numbers to manifolds of top dimension. See, however, Section \ref{sssec:extension to dimension n+2}.
\end{rem}

\ssec{Modules for the ring of higher differential operators} \label{ssec: diffops}

As noted in the abstract, the DG category $\Sph_G =\ICoh((BG^{S^2})^\wedge_{BG})$ is an instance of the following general construction.

\sssec{}

Let $Y$ be a quasi-compact derived algebraic stack locally of finite presentation and with affine diagonal, fixed throughout the paper. 
For $n \geq -1$,  we define the $E_{n+1}$-monoidal 
$$
\Sph(Y,n) :=\ICoh_0((Y^{S^n})^\wedge_Y),
$$
with multiplication induced by a higher dimensional version of the pair of pants.
For an extensive discussion of the $E_{n+1}$-monoidal structure, we recommend \cite{BT}.
Note the appearance of $\ICoh_0$, introduced in \cite{AG2}\footnote{and in \cite{centerH} in the case $Y$ is not bounded}, instead of simply $\ICoh$. These two sheaf theories concide for $Y$ smooth, but differ otherwise. 
It turns out that $\ICoh_0$ is much more amenable to gluing: see for instance \cite{centerH}, where the Drinfeld center of $\Sph(Y,0)$ is computed.

\begin{example}
For $n= 0$, we have $\Sph(Y,0) \simeq \H(Y)$ monoidally. For $n= -1$, we agree that $S^{-1} = \emptyset$ and that $Y^{\emptyset} = \pt$, whence $\Sph(Y,-1) \simeq \vDmod(Y)$.
\end{example}

\begin{example}
The choice $Y=BG$ and $n=2$ recovers $\Sph_G$. The reader will not lose much by considering $Y=BG$ thoughout this paper. However, the appearance of $\ICoh_0$ is \emph{unavoidable} even for $BG$, as soon as we integrate in $2$-dimensions: indeed, 
the stack of maps from a $2$-dimensional surface to $BG$ is quasi-smooth but not smooth (unless $G$ is trivial).
\end{example}

\sssec{} \label{sssec:higher Dmodules}

As we presently explain, $\Sph(Y,n)$ is a basic object of interest in shifted symplectic geometry (\cite{PTVV}, \cite{CPTVV}): a quantization of the $(n+1)$-shifted cotangent bundle of $Y$. For simplicity, let us assume that $Y$ is smooth, so that there is no difference between $\ICoh_0((Y^{S^n})^\wedge_Y)$ and $\ICoh((Y^{S^n})^\wedge_Y)$.
Then, by \cite[Chapter IV]{book}, 
$$
\Sph(Y,n)
=
\ICoh_0((Y^{S^n})^\wedge_Y)
\simeq
\U(\Tang_{Y/ (Y^{S^n})})
\mod 
(\ICoh(Y)),
$$
where $\Tang_{Y/ (Y^{S^n})}$ is the relative tangent Lie algebroid and $\U(\Tang_{Y/ (Y^{S^n})})$ its universal envelope. The latter is a monad acting on $\ICoh(Y)$, equipped with a canonical filtration. The appropriate version of the PBW theorem states that the associated graded equals the functor of tensoring with the symmetric algebra of $\Tang_{Y/ (Y^{S^n})} \in \ICoh(Y)$. Finally, one computes that $\Tang_{Y/ (Y^{S^n})} \simeq \Tang_Y[-(n+1)]$. Thanks to
$$
\Sym \Tang_Y[-(n+1)] \mod(\QCoh(Y)) \simeq \QCoh(T^*[n+1]Y),
$$
we obtain that $\Sph(Y,n)$ is indeed a quantization of the $(n+1)$-shifted cotangent bundle of $Y$. As such, $\Sph_G$ might be called the DG category of \emph{$(n+1)$-shifted $D$-modules} on $Y$.

\ssec{The extended spherical TFT}

\sssec{}

Recall the $(\infty, n+1)$-category $\Alg_{(n+1)}^\circ (\DGCat)$: this is the Morita $(\infty,n+1)$-category of $E_{n+1}$-monoidal DG categories (\cite[Definition 4.1.11]{cobordism-hyp},  \cite{Rune}, \cite{claudia-thesis}).

\sssec{}

Recall moreover that, by \cite[Theorem 4.1.24]{cobordism-hyp}, any $\A \in \Alg_{(n+1)}^\circ (\DGCat)$ gives rise to an $(n+1)$-dimensional TFT, that is to a symmetric monoidal functor
$$
\bT_{\A}:
\Bordfr_{n+1}
\longto
\Alg_{(n+1)}^\circ (\DGCat)
$$
defined on $(n+1)$-framed manifolds. Such functor is computed by topological chiral homology (alias: factorization homology). Besides \cite[Chapter 4.1]{cobordism-hyp}, see \cite{fact-hom}, \cite{AF}.

\sssec{}

The orthogonal group $O(n+1)$ acts on the space of $E_{n+1}$-monoidal DG categories, simply because it acts (in the obvious way) on the operad $E_{n+1}$. If an $E_{n+1}$-monoidal DG category $\A \in \Alg_{E_{n+1}}(\DGCat)$ admits a lift along the forgetful functor
$$
\bigt{
\Alg_{E_{n+1}}(\DGCat)
}^{SO(n+1)}
\longto
\Alg_{E_{n+1}}(\DGCat),
$$
then $\bT_\A$ descends to a TFT defined on oriented manifolds. 
In our case, we have
$$
\bigt{
\Alg_{E_{n+1}}(\DGCat)
}^{SO(n+1)}
\simeq
\Alg_{\fE_{n+1}}(\DGCat),
$$
where $\fE_{k} := SO(k) \ltimes E_k$ is the operad of framed little $k$-disks (\cite{CV}, \cite{Getzler}).

\sssec{}

It is tautological  that our $\Sph(Y,n)$ is actually an $\fE_{n+1}$-monoidal DG category,
whence we obtain a TFT
$$
\bT_n := \bT_{\Sph(Y,n)}: 
\Bordor_{n+1}
\longto 
\Alg^\circ_{(n+1)}(\DGCat),
\hspace{.4cm}
M 
\squigto \int_M \Sph(Y,n).
$$
For short, we sometimes call such theory the \emph{spherical TFT}. In particular, for $n=1$, we have the \emph{circular TFT} discussed in Section \ref{ssec:circular}.

\sssec{}

We claim that the entire theory $\bT_n$ can be described really explicitly in terms of $\ICoh_0$: this is the content of Claim  \ref{magic quasi-theorem} below. Before giving the answer, let us recall some of the structure we expect to find; in other words, let us recollect some of the features of the higher category $\Alg_{(n+1)}^\circ(\DGCat)$.
Let $0 \leq d \leq n+1$ and $\A$ a $\fE_{n+1}$-monoidal DG category. Then:

\begin{itemize}

\item

the topological chiral homology $\int_M \A$ on a closed oriented $d$-dimensional manifold $M$ is naturally an $E_{n+1-d}$-algebra;

\smallskip 

\item
for $M$ a $d$-dimensional oriented bordism between $B_1$ and $B_2$, the topological chiral homology $\int_M \A$ is an $E_{n+1-d}$-algebra in the $E_{n+1-d}$-monoidal $\infty$-category
$$
\Bigt{
\int_{B_1} \A, \int_{B_2} \A
} \bbimod.
$$
\end{itemize}

\begin{qthm} \label{magic quasi-theorem}
\footnote{In the present paper, we do not fully prove this claim, the (only) obstruction being the higher categorical nature of the statement. Instead, we prove several instances of it in the following sections.}
For $0 \leq d \leq n+1$ and $M$ an oriented $d$-dimensional manifold (possibly with boundary), we have
\begin{equation} \label{main-formula}
\int_{M^d} \Sph(Y,n)
\simeq
\ICoh_0
\bigt{
(Y^{\partial(M \times D^{n+1-d})   })^\wedge_{Y^M}
},
\end{equation}
and the two pieces of structure listed above are as follows.

\smallskip

If $M$ has no boundary, the above formula reduces to
\begin{equation} \label{eqn: no boundary}
\int_{M} \Sph(Y,n) 
\simeq 
\ICoh_0
\bigt{(Y^{M \times S^{n-d}})^\wedge_{Y^M}}
\simeq \Sph(Y^M, n-d).
\end{equation}
and the $E_{n-d+1}$-monoidal structure is the obvious one coming from boundaries of little disks in $D^{n+1-d}$.

\smallskip

If $M$ is viewed as a bordism between $B_1$ and $B_2$, the right action of 
$$
\int_{B_2} \Sph(Y,n)
 \simeq 
\ICoh_0
\bigt{(Y^{B_2 \times S^{n+1-d}})^\wedge_{Y^{B_2}}}
$$
on 
$$
\int_{M^d} \Sph(Y,n)
\simeq
\ICoh_0
\bigt{
(Y^{\partial(M \times D^{n+1-d})   })^\wedge_{Y^M}
}
$$
is induced by the pair of compatible cospans
$$ 
\begin{tiny}
\begin{tikzpicture}[scale=1.5]
\node (00) at (0,0) {$M \sqcup B $}; 
\node (10) at (4,0) {$M \sqcup_B B \simeq M $}; 
\node (20) at (7,0) {$M$,};
\node (01) at (0,1)  {$ \partial(M \times D^{n+1-d}) \sqcup (B \times S^{n+1-d}) $};
\node (11) at (4,1) {$\partial(M \times D^{n+1-d}) \sqcup_{B \times D^{n+1-d}} (B \times S^{n+1-d}) $};
\node (21) at (7,1) {$\partial(M \times D^{n+1-d})$};
\path[<-  ,font=\scriptsize,>=angle 90]
(11.east) edge node[below] {$ $} (21.west);
\path[<- ,font=\scriptsize,>=angle 90]
(10.east) edge node[below] {$ $} (20.west);
\path[  ->,font=\scriptsize,>=angle 90]
(01.east) edge node[below] {$ $} (11.west);
\path[  ->,font=\scriptsize,>=angle 90]
(00.east) edge node[below] {$ $} (10.west);
\path[->,font=\scriptsize,>=angle 90]
(01.south) edge node[right] {} (00.north);
\path[->,font=\scriptsize,>=angle 90]
(11.south) edge node[below] {$ $} (10.north);
\path[->,font=\scriptsize,>=angle 90]
(21.south) edge node[below] {$ $} (20.north);
\end{tikzpicture}
\end{tiny}
$$
where the right map forming 
$$
\partial(M \times D^{n+1-d}) \sqcup_{B \times D^{n+1-d}} (B \times S^{n+1-d})
$$
is defined by attaching the leftmost hemisphere of $S^{n+1-d}$ to $D^{n+1-d}$.
The left action of $B_1$ is defined in the same way.

\end{qthm}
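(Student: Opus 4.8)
The plan is to identify the right-hand side of \eqref{main-formula} with the value of the factorization-homology functor $\bT_n$ by exhibiting it as a symmetric monoidal functor of $(\infty,n+1)$-categories and then invoking the cobordism hypothesis. Concretely, I would define a candidate
$$
F: \Bordor_{n+1} \longto \Alg^\circ_{(n+1)}(\DGCat),
\qquad
M^d \squigto \ICOHTOP{M}{\partial(M \times D^{n+1-d})},
$$
whose functoriality in bordisms is supplied by the pull-push formalism of Section \ref{sssec:functoriality of ICOH0}: a bordism, thickened by $D^{n+1-d}$ and passed to boundaries, produces a pair of compatible cospans of finite spaces, and applying $\Maps(-,Y)$ followed by $\ICoh_0$ converts this into a pull-push functor. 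The explicit cospan displayed at the end of the statement is exactly this recipe applied to the composition of bordisms, and it is what encodes the asserted module structures. Once $F$ is constructed as a symmetric monoidal functor with $F(\pt) \simeq \Sph(Y,n)$ as an $SO(n+1)$-fixed fully dualizable object, the uniqueness clause of the cobordism hypothesis \cite[Theorem 4.1.24]{cobordism-hyp} forces $F \simeq \bT_n$, which is precisely \eqref{main-formula}.

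First I would dispatch the two easy pieces of structure. \emph{Normalization}: on $M = \pt$ one has $\pt \times D^{n+1} = D^{n+1}$ with $\partial D^{n+1} = S^n$ and $Y^{\pt}=Y$, so $F(\pt) = \ICoh_0((Y^{S^n})^\wedge_Y) = \Sph(Y,n)$, recovering the algebra on the point. \emph{Symmetric monoidality}: since $Y^{M_1 \sqcup M_2} \simeq Y^{M_1} \times Y^{M_2}$ and $\ICoh_0$ carries products of such formal completions to tensor products of DG categories, one gets $F(M_1 \sqcup M_2) \simeq F(M_1) \otimes F(M_2)$. The closed-manifold reduction \eqref{eqn: no boundary} is then immediate: when $\partial M = \emptyset$ we have $\partial(M \times D^{n+1-d}) = M \times S^{n-d}$, hence $Y^{\partial(M \times D^{n+1-d})} = (Y^M)^{S^{n-d}}$ and $F(M) \simeq \Sph(Y^M, n-d)$, whose $E_{n-d+1}$-structure comes from the little disks in $D^{n+1-d}$.

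The heart of the argument is \emph{excision}, i.e.\ that $F$ takes composition of bordisms to relative tensor product. Given a bordism composition $M = M_1 \cup_N M_2$ along a closed separating hypersurface $N$, the associated thickened boundaries organize into a pair of compatible cospans with cocartesian leftmost square; applying the contravariant cotensor $\Maps(-,Y)$ turns the cocartesian square into a cartesian one of mapping stacks (so that, e.g., $Y^M \simeq Y^{M_1}\times_{Y^N} Y^{M_2}$), and correspondingly into a gluing of the relevant formal completions. At this point I would invoke the excision property of $\ICoh_0$, Theorem \ref{thm:gluing along boundary}, to identify the resulting pull-push functor with the relative tensor product of $F(M_1)$ and $F(M_2)$ over $F(N) = \int_N \Sph(Y,n)$ — which is exactly the composition law in $\Alg^\circ_{(n+1)}(\DGCat)$. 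The same mechanism, applied to the pair of compatible cospans written in the statement, yields the claimed right action of $\int_{B_2}\Sph(Y,n)$ and, symmetrically, the left action of $\int_{B_1}\Sph(Y,n)$, once one checks that attaching the leftmost hemisphere of $S^{n+1-d}$ to $D^{n+1-d}$ does realize the pertinent cocartesian square.

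The main obstacle — and the reason this is stated as a Claim rather than a theorem — is purely higher-categorical: promoting the object-level and $1$-morphism-level identifications above to a genuine symmetric monoidal functor of $(\infty,n+1)$-categories. One must supply $F$ coherently on $k$-morphisms for all $k \leq n+1$, verify that excision holds at every categorical level rather than only for top-dimensional composition, and match the iterated module and bimodule structures with those of $\Alg^\circ_{(n+1)}(\DGCat)$. The algebro-geometric input at each level remains Theorem \ref{thm:gluing along boundary}, but assembling these inputs into fully coherent functoriality of $\ICoh_0$ on the $(\infty,n+1)$-category of iterated correspondences is what I would not carry out in general; instead, I would verify the formula \eqref{main-formula} in the explicit low-dimensional instances treated in Sections \ref{sec:spheres} and \ref{sec:Riemann-surfaces}.
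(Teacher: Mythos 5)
Your proposal is correct in precisely the qualified sense in which the paper itself treats Claim \ref{magic quasi-theorem}: like the paper, you isolate excision (Theorem \ref{thm:gluing along boundary}) as the entire geometric content, you identify the promotion of the pull-push assignment to a coherent symmetric monoidal $(\infty,n+1)$-functor as the sole remaining obstruction, and you retreat to the instances worked out in Sections \ref{sec:spheres} and \ref{sec:Riemann-surfaces}. The genuine divergence is the identification mechanism you propose once the candidate functor $F$ is built: you would invoke the uniqueness part of the cobordism hypothesis, matching $F(\pt)\simeq \Sph(Y,n)$ as $SO(n+1)$-fixed fully dualizable objects. The paper avoids this route twice over: its footnote in Section \ref{sec:excision} points instead to the Ayala--Francis characterization of factorization homology (\cite{AF}), by which a symmetric monoidal functor on manifolds satisfying $\otimes$-excision and agreeing with the given algebra on disks \emph{is} topological chiral homology, and in the two-dimensional case it actually executes, it compares $\bT_n$ with the Stokes-formula functor $\bT'_n$ on $\Bord_{1,2}$ and quotes \cite[Proposition 3.3]{BZGN} (agreement on $D^2$ and $P^2$ suffices). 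Both of these mechanisms are strictly cheaper than yours: they consume the Stokes assignment only as an $(\infty,1)$-level functor equipped with excision, never as a fully extended theory. Your cobordism-hypothesis step, by contrast, becomes available only after the fully coherent $(\infty,n+1)$-functor has been constructed --- that is, only after the very obstruction that you (and the paper) decline to resolve has been overcome, at which point the identification is essentially free by any method. So your route is sound but front-loads all the difficulty into the step you then skip, whereas the AF/BZGN mechanisms are what make the retreat to explicit low-dimensional computations an effective substitute for the general claim.
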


\sssec{Stokes interpretation}

To guess the formula (\ref{main-formula}), one runs the physics proof of Stokes' theorem: given $M$, replace points of $M$ with small $n$-dimensional spheres or better by $n$-dimensional cubes. Then the adjacent faces of those cubes cancel out. For instance, this procedure applied to $S^1$ gives rise to $S^1 \times S^{n-1}$: this yields the statement of Corollary \ref{cor:chiral-S1}. The same procedure for $n=2$ and $X$ a Riemann surface yields $X \times S^0$, which gives in turn our main Theorem \ref{main-thm-intro}. For an example of a manifold with boundary, consider $n=2$ and the usual pair of pants $P$: then the procedure yields a Riemann surface of genus $2$, which is indeed the boundary of $P \times D^1$.

\sssec{AKSZ formalism} \label{sssec:AKSZ}

Another way\footnote{We are grateful to the referee for suggesting this point of view.} to explain \eqref{main-formula}, or rather \eqref{eqn: no boundary}, is by using the interpretation of $\Sph(Y,n)$ as the DG category of $(n+1)$-shifted $D$-modules. 
One of the main theorems of \cite{PTVV} states that the stack of maps from an $d$-oriented manifold to an $n$-shifted symplectic stack is $(n-d)$-shifted symplectic. This is an instance of the AKSZ formalism (\cite{AKSZ}). 
Applying this construction to the $(n+1)$-shifted cotangent space $T^*[n+1]Y$ and quantizing, we get the gist of \eqref{eqn: no boundary}: integrating over a closed oriented manifold $M^d$ sends $(n+1)$-shifted $D$-modules on $Y$ to $(n+1-d)$-shifted $D$-modules on the mapping stack $Y^M$.

\medskip

Below are two significant examples of \eqref{eqn: no boundary}.

\sssec{(Top dimension)} 

For $M^{n+1}$ a closed manifold of dimension $n+1$, the theorem yields an equivalence
$$
\int_{M^{n+1}} \Sph(Y,n) 
\simeq 
\vDmod(Y^{M})
$$
of DG categories.

\sssec{(Riemann surfaces)} \label{sssec:Riemann surfaces - statement}

If $n \geq 1$ and $X$ is a Riemann surface, then the TFT predicts an equivalence
$$
\int_{X} \Sph(Y,n) 
\simeq
\ICoh_0
\bigt{
(Y^{X \times S^{n-2}})^\wedge_{Y^X}
}
$$
of $E_{n-1}$-monoidal DG categories, where the $E_{n-1}$-structure on the RHS in induced by the pair of pants construction for $S^{n-2}$, together with the functoriality of $\ICoh_0$.

\medskip

In particular, for $n=2$, we get monoidal equivalence
$$
\int_{X} \ICoh((Y^{S^2})^\wedge_Y)
\simeq
\ICoh_0
\bigt{
(Y^{X \times S^0} )^\wedge_{Y^X}
}
\simeq
\ICoh_0
\bigt{
(Y^{X} \times Y^X )^\wedge_{Y^X}
},
$$
the RHS being monoidal under convolution. Since such monoidal DG category is $\H(Y^X)$ by definition, we see that Theorem \ref{main-thm-intro} is a particular case of Theorem \ref{magic quasi-theorem} applied to $Y=BG$.


\begin{rem}
As in Remark \ref{rem: naive}, it is instructive to compare \eqref{eqn: no boundary} with the formula for the topological chiral homology of $\QCoh(Y)$, the latter viewed as an $E_{n+1}$-monoidal DG category. Thanks to \cite{BFN}, we have $\int_{M^d} \QCoh(Y) \simeq \QCoh(Y^{M^d})$. Thus, \eqref{eqn: no boundary} refines this formula, in the same way as $D$-modules refine quasi-coherent sheaves.
\end{rem}

\ssec{Example: the \virg{circular} TFT} \label{ssec:circular}

Let us illustrate the example of $n=1$.

\sssec{}

In this case, attached to $\pt$ we have the \emph{circular category}, that is, the $E_2$-monoidal DG category 
$$
\Sph(Y,1)
:=
\ICOHFORM{Y}{LY}
\simeq
\Hcorr {LY} Y {\pt}.
$$
The monoidal structure is induced by the correspondence
$$
(LY)^\wedge_Y \times (LY)^\wedge_Y
\longleftto
(LY \times_Y LY)^\wedge_{Y}
\longto
(LY)^\wedge_Y,
$$
and the $E_2$-structure comes from the identification $Y^P \simeq LY \times_Y LY$, where $P$ is the pair of pants.

\sssec{}

We get a symmetric monoidal $(\infty,2)$-functor
$$
\bT_1:
\Bord_2
\longto
\Alg_{(2)}^\circ(\DGCat)
$$
described as follows:
\begin{itemize}
\item
the value on $\pt$ is the circular category;
\item
the value on $S^1$ is the $E_1$-monoidal DG category
$$
\bT_1(S^1) := 
\ICOHFORM{LY}{LY \times LY}
\simeq
\H(LY);
$$

\item

the value on an oriented $2$-dimensional surface $\Sigma$ with boundary $\partial S = \partialin S \sqcup \partialout S$ is the DG category
$$
\bT_1(S) := 
\ICOHFORM{Y^S}{Y^{\partialin S} \times  Y^{\partialout S} }
\simeq
\Hcorr{Y^{\partialin S}}{Y^S }{Y^{\partialout}},
$$
equipped with the natural $(\H(Y^{\partialin S}), \H(Y^{\partialout}))$-bimodule structure coming from the theory of $\H$ as in \cite{shvcatHH};

\item
in particular, the value on a closed oriented $2$-dimensional surface $S$ is the DG category
$$
\bT_1(S) := 
\ICOHTOP{S}{\emptyset}
\simeq
\ICOHFORM{Y^S}{\pt}
=:
\vDmod(Y^S).
$$

\end{itemize}

\sec{Excision for $\ICoh_0$} \label{sec:excision}

The most direct way to prove Claim \ref{magic quasi-theorem} would be to construct the TFT by hands, that is, to construct a symmetric monoidal $(\infty, n+1)$-functor from $\Bordor_{n+1} \to \Alg_{(n+1)}^\circ(\DGCat)$ that upgrades the assigment
\begin{equation} \label{eqn:assigment of spherical TFT}
M^d
\squigto
\ICOHTOP{M^d}{\partial(M^d \times D^{n+1-d})}.
\end{equation}
In the present paper, we content ourselves with a more modest and less structured approach, which however contains the main substance of the proof.\footnote{Indeed, it is conceivable that our excision theorem, combined with the results and techniques of \cite{AF}, actually gives Claim \ref{magic quasi-theorem}.} 
 Namely, we establish an excision (or gluing) theorem, Theorem \ref{thm:gluing along boundary}, that suffices for the proof Theorem \ref{main-thm-intro}.


\ssec{Gluing the $\ICoh_0$ categories}

Let us proceed to formulate the excision theorem. 

\sssec{}

Let $d \leq n$ and $M$ an oriented $(d+1)$-dimensional manifold, regarded as a bordism from $\pin M$ to $\pout M =: B$. 
As explained in Claim \ref{magic quasi-theorem}, the $E_{n-d+1}$-monoidal DG category 
$$
\ICOHTOP {B}{B \times S^{n-d}}
$$
acts on 
$$
\ICOHTOP {M}{ \partial(M \times D^{n-d})}
$$
from the right. 

\begin{rem}
As $n-d+1 \geq 1$, the former is in particular a monoidal DG category.
\end{rem}

\sssec{}

Now, let $N$ be another $(d+1)$-dimensional manifold, regarded as a bordism from $\pin N$ to $\pout N$. Assume we are given an identification
$$
B := \pout M \simeq \pin N.
$$ 
Then the $E_{n+1-d}$-monoidal DG category
$$
\ICOHTOP {B}{B \times S^{n-d}}
$$
acts on 
$$
\ICOHTOP {N}{ \partial(N \times D^{n-d})}
$$
from the left. The key point of this paper is to compute the relative tensor product
$$
\ICOHTOP {M}{ \partial(M \times D^{n-d})}
\usotimes{\ICOHTOP {B}{B \times S^{n-d}}}
\ICOHTOP {N}{ \partial(N \times D^{n-d})},
$$
which ought to correspond to the tensor product
$$
\int_M \Sph(Y,n)
\usotimes{\int_{B} \Sph(Y,n)}
\int_N \Sph(Y,n).
$$
By excision for topological chiral homology, see \cite{fact-hom}, we expect this to be equivalent to $\int_{M \sqcup_B N} \Sph(Y,n)$. The following theorem shows this is indeed the case.

\begin{thm} \label{thm:gluing along boundary}
Let $M$, $N$, $B$ and $d$ be as above.
The functor
$$
\ICOHTOP {M}{ \partial(M \times D^{n-d})}
\otimes
\ICOHTOP {N}{ \partial(N \times D^{n-d})}
$$
$$
\longto
\ICOHTOP {M \sqcup_B N}{ \partial((M \sqcup_B N) \times D^{n-d})}
$$
induced by the compatible pair of cospans
$$ 
\begin{tiny}
\begin{tikzpicture}[scale=1.5]
\node (00) at (0,0) {$M \sqcup N $}; 
\node (10) at (3,0) {$M \sqcup_B N $}; 
\node (20) at (6,0) {$M \sqcup_B N$};
\node (01) at (0,1)  {$ \partial(M \times D^{n-d}) \sqcup \partial(N \times D^{n-d}) $};
\node (11) at (3,1) {$\partial(M \times D^{n-d}) \sqcup_B \partial(N \times D^{n-d}) $};
\node (21) at (6,1) {$\partial((M \sqcup_B N) \times D^{n-d})$};
\path[<-  ,font=\scriptsize,>=angle 90]
(11.east) edge node[below] {$ $} (21.west);
\path[<-  ,font=\scriptsize,>=angle 90]
(10.east) edge node[below] {$ $} (20.west);
\path[ ->,font=\scriptsize,>=angle 90]
(01.east) edge node[below] {$ $} (11.west);
\path[  ->,font=\scriptsize,>=angle 90]
(00.east) edge node[below] {$ $} (10.west);
\path[->,font=\scriptsize,>=angle 90]
(01.south) edge node[right] {} (00.north);
\path[->,font=\scriptsize,>=angle 90]
(11.south) edge node[below] {$ $} (10.north);
\path[->,font=\scriptsize,>=angle 90]
(21.south) edge node[below] {$ $} (20.north);
\end{tikzpicture}
\end{tiny}
$$
descends to an equivalence
\begin{equation}
\nonumber
\ICOHTOP {M}{ \partial(M \times D^{n-d})}
\usotimes{\ICOHTOP {B}{B \times S^{n-d}}}
\ICOHTOP {N}{ \partial(N \times D^{n-d})}
\end{equation}
\begin{equation} \label{eqn:relative tensor for gluing}
\xto{\; \; \simeq \; \;}
\ICOHTOP {M \sqcup_B N}{ \partial((M \sqcup_B N) \times D^{n-d})}.
\end{equation}
\end{thm}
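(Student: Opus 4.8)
The plan is to compute the left-hand side of \eqref{eqn:relative tensor for gluing}, the relative tensor product over $\A_B := \ICOHTOP{B}{B \times S^{n-d}}$, by means of the two-sided bar resolution. Writing $\A_M := \ICOHTOP{M}{\partial(M \times D^{n-d})}$, $\A_N := \ICOHTOP{N}{\partial(N \times D^{n-d})}$ for the two module categories and $\A_{\Glue} := \ICOHTOP{M \sqcup_B N}{\partial((M \sqcup_B N) \times D^{n-d})}$ for the target, one has
$$
\A_M \usotimes{\A_B} \A_N
\;\simeq\;
\colim_{[k] \in \bDelta^\op}
\A_M \otimes \A_B^{\otimes k} \otimes \A_N .
$$
The first step is a Künneth formula for $\ICoh_0$: since $Y^{S_1 \sqcup S_2} \simeq Y^{S_1} \times Y^{S_2}$, and both formal completion and $\ICoh_0$ are multiplicative under external products, each term of the bar complex is identified with $\ICoh_0$ of a single pair,
$$
\A_M \otimes \A_B^{\otimes k} \otimes \A_N
\;\simeq\;
\ICoh_0\bigl( (Y^{W_k})^\wedge_{Y^{V_k}} \bigr),
$$
where $V_k := M \sqcup B^{\sqcup k} \sqcup N$ and $W_k := \partial(M \times D^{n-d}) \sqcup (B \times S^{n-d})^{\sqcup k} \sqcup \partial(N \times D^{n-d})$. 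Crucially, the simplicial structure maps of the bar complex are precisely the pull-push functors attached to the pairs of compatible cospans that glue adjacent copies of $B$ — the outer faces being the module actions of Claim \ref{magic quasi-theorem}, the inner faces the multiplication on $\A_B$.

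Next I would identify the geometric realization of the resulting simplicial object with the glued data. The simplicial space $[k] \mapsto (V_k, W_k)$ is the two-sided bar construction presenting $M$ as a right $B$-space and $N$ as a left $B$-space; its realization is the homotopy pushout, so that $\colim_{\bDelta^\op} V_\bullet \simeq M \sqcup_B N$, and, using that $\partial(- \times D^{n-d})$ is compatible with gluing along the collar of $B$, $\colim_{\bDelta^\op} W_\bullet \simeq \partial((M \sqcup_B N) \times D^{n-d})$. Applying the cotensor $Y^{(-)}$, which sends colimits of spaces to limits of stacks, turns these into
$$
Y^{M \sqcup_B N} \simeq Y^M \times_{Y^B} Y^N ,
\qquad
Y^{\partial((M \sqcup_B N) \times D^{n-d})} \simeq \lim_{\bDelta} Y^{W_\bullet},
$$
compatibly with the formal completions over them. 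Here one must check that completing $Y^{W_\bullet}$ along $Y^{V_\bullet}$ commutes with totalization; this is exactly the point at which the collar presentation is used, and where $\ICoh_0$ rather than bare $\ICoh$ becomes essential.

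The heart of the argument is then a descent statement: the canonical functor
$$
\colim_{[k] \in \bDelta^\op} \ICoh_0\bigl( (Y^{W_k})^\wedge_{Y^{V_k}} \bigr)
\longrightarrow
\ICoh_0\bigl( (Y^{\colim W_\bullet})^\wedge_{Y^{\colim V_\bullet}} \bigr)
$$
is an equivalence. I would prove this from the defining presentation $\ICoh_0(Z^\wedge_Y) = \ICoh(Z^\wedge_Y) \ustimes{\ICoh(Y)} \QCoh(Y)$: descent for the $\QCoh$-factor along the \v{C}ech nerve of $Y^M \times_{Y^B} Y^N$ is the usual fppf/$1$-affineness descent, whereas descent for the $\ICoh$-factor over the formal completion is precisely the gluing property that holds for $\ICoh_0$ and fails for naive $\ICoh$. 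Concretely, I would deduce it from base change for $\ICoh_0$ along the cartesian squares of mapping stacks furnished by $Y^M \times_{Y^B} Y^N$ (the functoriality of Section \ref{sssec:functoriality of ICOH0}); the same base change shows that the bar functor $\A_M \otimes \A_N \to \A_{\Glue}$ is $\A_B$-balanced, and hence factors through the relative tensor product to produce the comparison map of \eqref{eqn:relative tensor for gluing}.

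The hard part will be this last descent step, for two reasons. First, formal completions do not commute with colimits in general, so one must verify that the collar-bar presentation is \emph{proper enough} for $(-)^\wedge$ to be exchanged with the realization — this is the structural feature that singles out $\ICoh_0$. Second, one must ensure convergence of the bar complex, i.e. that $\A_B$ acts well enough (dualizability of $\A_B$, or at least the validity of the Beck--Chevalley condition) for the relative tensor product to be computed as claimed. I would attack both by first proving the equivalence \emph{locally}, on a collar $B \times [0,1]$, where the gluing degenerates to the defining module structure of $\A_B$ and the statement reduces to the Beck--Chevalley isomorphism for $\ICoh_0$, and then globalizing over $M$ and $N$.
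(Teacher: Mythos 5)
Your overall architecture (two-sided bar resolution, K\"unneth identification of the simplices, then a descent statement identifying the realization) matches the paper's proof in outline, but there is a genuine gap at precisely the step you flag as ``the hard part'', and the route you propose for it does not work. In your bar complex $\colim_{[k]} \, \A_M \otimes \A_B^{\otimes k} \otimes \A_N$ all tensor products are absolute, so the face maps are pull-push functors along \emph{correspondences} of spaces (the action and multiplication cospans of Claim \ref{magic quasi-theorem}). Consequently the simplicial DG category you obtain is \emph{not} the covariant image under $\ICoh_0 \circ (\text{completion}) \circ Y^{(-)}$ of any honest simplicial space: there is no simplicial pair $(V_\bullet, W_\bullet)$ with genuine structure maps of which it is the image, and hence no ``$\colim W_\bullet$'' to compare with. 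The descent for $\ICoh_0$ available in \cite{centerH} (descent in the second variable, along maps) says nothing about colimits of diagrams of pull-push functors, and neither the fiber-product presentation $\ICoh(Z^\wedge_Y) \times_{\ICoh(Y)} \QCoh(Y)$, nor $1$-affineness, nor a Beck--Chevalley condition on a collar supplies such a statement. Your plan therefore reduces the theorem to an unproven claim that is essentially as hard as the theorem itself.

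The key idea of the paper, absent from your proposal, is to \emph{change the base of the bar construction before running it}. One first checks, via an explicit pushout square of spaces, that the action of $\QCoh(Y^B)$ on $\A_M$ obtained by restriction along the monoidal functor $\QCoh(Y^B) \to \A_B$ coincides with the action by pullback $\QCoh(Y^B) \xto{r^*} \QCoh(Y^M) \to \A_M$, where $r: Y^M \to Y^B$ is induced by $B \hookrightarrow M$ (and similarly for $\A_N$). This permits computing $\A_M \otimes_{\A_B} \A_N$ by a bar construction internal to $\QCoh(Y^B)\mmod$, i.e.\ with every tensor product taken over $\QCoh(Y^B)$. Combined with the base-change/tensor-product theorems for $\ICoh_0$ from \cite{centerH} and the identification $\QCoh(Y^M) \usotimes{\QCoh(Y^B)} \QCoh(Y^N) \simeq \QCoh(Y^{M \sqcup_B N})$, every simplex then becomes $\ICoh_0$ of a completion whose bottom is \emph{constantly} $Y^{M \sqcup_B N}$ and whose top runs over the \v{C}ech conerve of the single collapse map $\partial((M \sqcup_B N) \times D^{n-d}) \to \partial(M \times D^{n-d}) \sqcup_B \partial(N \times D^{n-d})$; all structure functors are now honest $(*,0)$-pushforwards along maps, not correspondences. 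At that point the second-variable descent of $\ICoh_0$ established in \cite{centerH} concludes. This relativization over $\QCoh(Y^B)$ is exactly the device that makes your completion-versus-colimit problem disappear; without it the argument does not close.
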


\begin{rem}
It is clear from the construction that the equivalence \eqref{eqn:relative tensor for gluing} is compatible with the left and right actions of
$$
\ICOHTOP {\pin M }{\pin M\times S^{n-d}}
\; \;  \mbox{ and } \; \; 
\ICOHTOP {\pout N }{\pout N \times S^{n-d}}
$$
on both sides.
Also, a diagram chase with little disks guarantees that \eqref{eqn:relative tensor for gluing} is $E_{n-d}$-monoidal.
\end{rem}

\sssec{}

The proof of Theorem \ref{thm:gluing along boundary} occupies the entire Section \ref{ssec:proof-Thm-gluing along boundary}.  Before getting there, let us observe that the difficulty in computing the relative tensor product (\ref{eqn:relative tensor for gluing}) is the following: the actions of the \virg{actor} on the \virg{actees} and defined by pull-push functors, rather than by (say) simply pull-backs along some map. This prevents us from invoking any kind of descent for $\ICoh_0$.
To circumvent this, we shall exploit the monoidal functor
$$
(\QCoh(Y^{B}), \otimes)
\longto
\Bigt{
\ICOHTOP {B}{B \times S^{n-d}}, \star
}.
$$

\sssec{}

Let us articulate the above idea in more detail. There are a priori various ways in which $\QCoh(Y^{B})$, viewed as a symmetric monoidal DG category, can act on $\ICOHTOP {B}{B \times S^{n-d}}$. The first, completely canonical, is via the monoidal functor
$$
(\QCoh(Y^{B}), \otimes)
\longto
\Bigt{
\ICOHTOP {B}{B \times S^{n-d}}, \star
}.
$$
The second one depends on the choice of a point of $S^{n-d}$ (equivalently, a connected component of $S^{n-d}$): such choice gives rise to a map 
$$
\bigt{ Y^{B \times S^{n-d}} }^\wedge_{Y^B}
\longto
Y^B
$$
and hence to an action by $(!,0)$-pullback. We leave it to the reader to check (by standard base-change nonsense) that these two actions are identified.

\ssec{Proof of Theorem \ref{thm:gluing along boundary}} \label{ssec:proof-Thm-gluing along boundary}

We will proceed in several steps.

\sssec{}

A routine diagram chase with cospans shows that the functor in question descends to the relative tensor product as stated. Thus, we just need to verify that the functor appearing in (\ref{eqn:relative tensor for gluing}) is an equivalence of DG categories.

\sssec{}

Consider now the monoidal functor
$$
(\QCoh(Y^{B}), \otimes)
\longto
\Bigt{
\ICOHTOP {B}{B \times S^{n-d}}, \star
}.
$$
We claim that the resulting right action of $\QCoh(Y^{B})$ on $\ICOHTOP {M}{ \partial(M \times D^{n-d})}$ comes from the monoidal functor
$$
\QCoh(Y^{B})
\xto{r^*}
\QCoh(Y^M)
\longto
\ICOHTOP {M}{ \partial(M \times D^{n-d})},
$$
where $r: Y^M \to Y^{B}$ is the map induced by the inclusion $B \hto M$.
This is made evident by the pushout square
$$ 
\begin{tikzpicture}[scale=1.5]
\node (00) at (0,0) {$\partial(M \times D^{n-d})  \sqcup B $}; 
\node (10) at (4,0) {$\partial(M \times D^{n-d})  \sqcup_B B \simeq \partial(M \times D^{n-d}),$}; 
%
\node (01) at (0,1)  {$ \partial(M \times D^{n-d}) \sqcup (B \times S^{n-d}) $};
\node (11) at (4,1) {$\partial(M \times D^{n-d}) \sqcup_{B \times D^{n-d}} (B \times S^{n-d}) $};
%
\path[ ->,font=\scriptsize,>=angle 90]
(01.east) edge node[below] {$ $} (11.west);
\path[ ->,font=\scriptsize,>=angle 90]
(00.east) edge node[below] {$ $} (10.west);
\path[->,font=\scriptsize,>=angle 90]
(01.south) edge node[right] {} (00.north);
\path[->,font=\scriptsize,>=angle 90]
(11.south) edge node[below] {$ $} (10.north);

\end{tikzpicture}
$$
together with the fact that the map $B \to \partial(M \times D^{n-d})$ appearing in the bottom horizontal arrow factors as
$$
B \hto M \to \partial(M \times D^{n-d}).
$$
Parallel considerations hold for $M$ in place of $N$.

\sssec{}

To perform the computation without too much notation, let us set
\renc{\Q}{\mathcal Q}
\begin{eqnarray}
\nonumber
 & \, &
\B := \ICOHTOP {B}{B \times S^{n-d}},
\\
\nonumber
 & \, &
\M := 
\ICOHTOP {M}{ \partial(M \times D^{n-d})},
\\
\nonumber
 & \, &
\N :=
\ICOHTOP {N}{ \partial(N \times D^{n-d})},
\end{eqnarray}
as well as
\begin{eqnarray}
\nonumber
 & \, &
\A := \QCoh(Y^B),
\\
\nonumber
 & \, &
\Q := \QCoh(Y^M), 
\\
\nonumber
 & \, &
\R := \QCoh(Y^N). 
\end{eqnarray}
We need to compute $\M \otimes_\B \N$, bearing in mind that $\A$ maps monoidally to $\B$.
Hence, we can compute $\M \otimes_\B \N$ as a relative tensor product in the symmetric monoidal $\infty$-category $\A \mmod$:
\begin{equation} \label{eqn:tensor product as a bar construction over A}
\M \otimes_\B \N
\simeq
\uscolim{[m] \in \bDelta^\op}
\Bigt{
\M \otimes_{\A}
\underset{m \, \text{occurrences of $\B$}}{\underbrace{\B \Otimes_\A \cdots \Otimes_\A \B}}
\otimes_{\A} \N
}.
\end{equation}

\sssec{}

Recall now that the action of $\A$ on $\M$ (respectively, $\N$) comes from the monoidal functor $\A \to \Q \to \M$ (respectively, $\A \to \R \to \N$).
Then the zero simplex above is given by
\begin{eqnarray}
\nonumber
\M \otimes_{\A} \N
& \simeq &
\M \otimes_\Q \Q \otimes_{\A} \R \otimes_\R \N
\\
\nonumber
& \simeq &
\M \otimes_\Q \QCoh(Y^{M \sqcup_B N}) \otimes_\R \N.
\end{eqnarray}
Reverting to the $\ICoh_0$ notation, we have
\begin{eqnarray}
\nonumber
\M \otimes_{\A} \N
& \simeq &
\ICOHTOP {M}{ \partial(M \times D^{n-d})}
\usotimes{\QCoh(Y^M)}
\QCoh(Y^{M \sqcup_B N})
\usotimes{\QCoh(Y^N)}
\ICOHTOP {N}{ \partial(N \times D^{n-d})}
\\
\nonumber
& \simeq &
\ICOHTOP {M \sqcup_B N}{ \partial(M \times D^{n-d}) \sqcup_M M \sqcup_B N \sqcup_N \partial(N \times D^{n-d})}
\\
\nonumber
& \simeq &
\ICOHTOP {M \sqcup_B N}{ \partial(M \times D^{n-d})  \sqcup_B \partial(N \times D^{n-d})},
\end{eqnarray}
where we have used the theory of $\ICoh_0$ on possibly unbounded stacks as developed in \cite[Section 3]{centerH}.

\medskip

In passing, let us note that it is good news that the space appearing in the top part of the formal completion is exactly the space appearing at the center of the top cospan in the statement of Theorem \ref{thm:gluing along boundary}.

\sssec{}

The other simplices of the simplicial category appearing in (\ref{eqn:tensor product as a bar construction over A}) are no more difficult to compute. First off, for $m \geq 0$, we have
$$
\underset{m \, \text{occurrences of $\B$}}{\underbrace{\B \Otimes_\A \cdots \Otimes_\A \B}}
\simeq
\ICOHTOP{B}{ ((B \times S^{n-d})^{\sqcup_B m}) },
$$
where we have used the notation
$$
S^{\sqcup_B m}
:=
\underset{m \, \text{occurrences of $S$}}
{\underbrace{S \sqcup_B \cdots \sqcup_B S}},
\hspace{.4cm}
S^{\sqcup_B 0}
\simeq B.
$$
From this, we quickly deduce the equivalence
$$
\M \otimes_{\A}
\underset{m \, \text{occurrences of $\B$}}{\underbrace{\B \Otimes_\A \cdots \Otimes_\A \B}}
\otimes_{\A} \N
\simeq
\ICOHTOP 
{M \sqcup_B N}
{\partial(M \times D^{n-d}) 
 \sqcup_B
\bigt{ (B \times S^{n-d})^{\sqcup_B m} }
\sqcup_B
\partial(N \times D^{n-d})}.
$$

\sssec{}

Unraveling the equivalences, we see that $\M \otimes_\B \N$ is obtained from the tautological cosimplicial space (over $B$)
\begin{equation} \label{eqn:Cech-cosimplicial-diagram-of-spaces}
\partial(M \times D^{n-d}) 
 \sqcup_B
 \bigt{
(B \times S^{n-d})^{\sqcup_B m}
}
\sqcup_B
\partial(N \times D^{n-d})
\end{equation}
by the following procedure:
\begin{itemize}
\item
apply $\Maps(-,Y)$ to obtain a simplical stack over $Y^B$;
\item
formally complete at each stage (or better, regard the above as a simplcial object in $\Arr(\Stk)$);
\item
take the covariant $\ICoh_0$ to obtain a simplicial DG category;
\item
take the colimit (geometric realization).
\end{itemize}

\sssec{}

Consider now $\partial((M \sqcup_B N) \times D^{n-d})$ and the map
$$
\partial((M \sqcup_B N) \times D^{n-d})
\longto
\partial((M \sqcup_B N) \times D^{n-d})
\underset{B \times S^{n-d-1} } \sqcup
B
\simeq
\partial(M \times D^{n-d})  \sqcup_B \partial(N \times D^{n-d})
$$ 
defined by collapsing the central 
$B \times S^{n-d-1} \hto 
\partial((M \sqcup_B N) \times D^{n-d})
$ onto $B$.
It is clear that the Cech coresolution of such map identifies with the cosimplicial space featuring in (\ref{eqn:Cech-cosimplicial-diagram-of-spaces}).

\sssec{}

Since $\ICoh_0$ satisfies descent in the second variable (see \cite{centerH}), we conclude that
$$
\M \otimes_\B \N
\simeq
\ICOHTOP {M \sqcup_B N}{ \partial((M \sqcup_B N) \times D^{n-d})}
$$
as desired.

\sec{Spheres and tori} \label{sec:spheres}

With no doubt, Theorem \ref{thm:gluing along boundary} is the most important result of this paper. We now exploit it to compute the topological chiral homology of $\Sph(Y,n)$ on all spheres and tori of dimension $\leq n+1$. We start with the circle, in which case the calculation can be performed in greater generality. 
The case of higher dimensional tori follows by iterating this computation, while the case of spheres follows inductively using the hemisphere decomposition and Theorem \ref{thm:gluing along boundary}. 

Having determined the value of our theory $\bT_n$ on all spheres, we can show that $\bT_n$ \emph{further extends} to an $(n+2)$-dimensional theory that assigns vector spaces to closed oriented $(n+2)$-dimensional manifolds.

\ssec{Computing the trace}

For $A$ an $E_1$-monoidal DG category, we have $\int_{S^0} \A
\simeq
\A \otimes \A^\rev$ tautologically, and
$$
\int_{S^1} \A
\simeq
\A
\usotimes{\A \otimes \A^\rev} \A
=: \Tr(\A)
$$
by excision, see \cite{fact-hom}. Note that $\Tr(\A)$ is the Hochschild homology category of $\A$, also called the \emph{trace} in \cite{BFN}.

In our case, describing $\Tr(\Sph(Y,n))$ in tangible terms is easy thanks to Theorem \ref{thm:gluing along boundary}, or alternatively directly using the theory developed in \cite{centerH}. In fact, let us start by performing a more general calculation, which is a mild generalization of the main result of \cite{centerH}.

\begin{thm} \label{thm:trace of H relative Y/Z}
Consider the the convolution monoidal DG category
$$
\H(Y/Z) := \ICoh_0((Y \times_Z Y)^\wedge_Y),
$$
where $Y \to Z$ is a map of algebraic stacks with perfect cotangent complex.
Then
$$
\int_{S^1}
\H(Y/Z)
\simeq
\ICoh_0((LZ)^\wedge_{LY}),
$$
as DG categories.
\end{thm}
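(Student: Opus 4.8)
The plan is to adapt the bar-construction argument from the proof of Theorem \ref{thm:gluing along boundary} to the cyclic (trace) setting. Write $\B := \H(Y/Z)$ and $\B^{\mathsf e} := \B\otimes\B^\rev$, so that, by the definition of the trace recalled above, $\int_{S^1}\H(Y/Z)\simeq\B\otimes_{\B^{\mathsf e}}\B$ is a \emph{one-sided} relative tensor product over $\B^{\mathsf e}$. As in the relative form of Remark \ref{rem: naive}, the diagonal $Y\hto Y\times_Z Y$ induces a monoidal functor $\QCoh(Y)\to\B$, which one checks to be central by the same base-change nonsense used just before Section \ref{ssec:proof-Thm-gluing along boundary}; hence $\QCoh(Y\times Y)=\QCoh(Y)\otimes\QCoh(Y)\to\B^{\mathsf e}$ is central as well. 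First I would use this to rewrite $\B\otimes_{\B^{\mathsf e}}\B$ as the realization of a bar construction relative to $\A := \QCoh(Y\times Y)$,
$$
\int_{S^1}\H(Y/Z)
\simeq
\uscolim{[m]\in\bDelta^\op}
\Bigt{\B\otimes_\A(\B^{\mathsf e})^{\otimes_\A m}\otimes_\A\B},
$$
the advantage being, exactly as in \emph{loc.\ cit.}, that each term is now computed by an honest pull-push of $\ICoh_0$ categories — tensoring over $\QCoh(Y\times Y)$ amounts to gluing mapping stacks over $Y\times Y$ — so that no descent for $\H(Y/Z)$ itself is invoked at this stage.

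Next I would identify the resulting simplicial stack. Unwinding the pull-push computation of each term — precisely as in the proof of Theorem \ref{thm:gluing along boundary}, where the analogous colimit was matched with a \v{C}ech (co)resolution — one finds that the simplicial object underlying the bar construction is the \emph{cyclic nerve} of the formal pair groupoid
$$
(Y\times_Z Y)^\wedge_Y \rightrightarrows Y,
$$
that is, the formal completion along the diagonals of the cyclic nerve of the \v{C}ech groupoid of $Y\to Z$, with $\ICoh_0$ taken level-wise. Here it is essential that $\H(Y/Z)$ is built from the completion $(Y\times_Z Y)^\wedge_Y$ and not from $Y\times_Z Y$ itself: the completion makes $\H(Y/Z)$ an \emph{infinitesimal} (rather than Morita-trivial) groupoid algebra — morally the relative differential operators $\U(\Tang_{Y/Z})$ of Section \ref{sssec:higher Dmodules} — which is exactly what prevents the trace from collapsing.

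Finally I would realize the colimit. The geometric realization of the cyclic nerve of a groupoid is the free loop space of its realization; applied here, the \v{C}ech groupoid of $Y\to Z$ realizes (formally) to $Z$, so its cyclic nerve realizes to $LZ = \Maps(S^1,Z)$, while the diagonal completions assemble into the completion of $LZ$ along the constant loops $LY \to LZ$ induced by $Y\to Z$. Invoking descent for $\ICoh_0$ in the second variable (see \cite{centerH}), together with the formalism of $\ICoh_0$ on possibly unbounded stacks from \cite[Section 3]{centerH} — needed since $LZ$ and $LY$ are typically unbounded — one commutes $\ICoh_0$ past the realization to obtain
$$
\int_{S^1}\H(Y/Z)\simeq\ICoh_0\bigt{(LZ)^\wedge_{LY}},
$$
as desired. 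As a consistency check, the case $Z=\pt$ gives $LZ=\pt$ and recovers $\int_{S^1}\H(Y)\simeq\vDmod(LY)$, in agreement with the top-dimensional instance of Claim \ref{magic quasi-theorem}.

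The main obstacle is this last step: rigorously identifying the realization of the \emph{completed} cyclic nerve with the formal completion $(LZ)^\wedge_{LY}$ — rather than with $LZ$ itself — and verifying that $\ICoh_0$ commutes with it. Because $Y\to Z$ need not be a cover and the loop stacks are unbounded, naive descent is unavailable; the real content is that the stage-wise diagonal completions organize precisely into the single completion along $LY$, which is exactly the situation the theory of $\ICoh_0$ on unbounded stacks of \cite{centerH} was designed to address. By contrast, the centrality of $\QCoh(Y)\to\H(Y/Z)$ and the level-wise pull-push identification of the bar terms are routine transcriptions of the arguments already carried out for Theorem \ref{thm:gluing along boundary}.
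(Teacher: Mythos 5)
Your proposal is correct and follows essentially the same route as the paper: the paper likewise exploits the central monoidal functor from $\QCoh(Y)$ to compute the trace as a relative (cyclic) bar construction, identifies each simplicial term as $\ICoh_0\bigt{(Y\times_Z\cdots\times_Z Y\times_Z LZ)^\wedge_{LY}}$ via the pull-push formalism of \cite{centerH}, recognizes the resulting diagram as the Cech resolution of $(Y\times_Z LZ)^\wedge_{LY}\to(LZ)^\wedge_{LY}$ (your ``completed cyclic nerve''), and concludes by descent for $\ICoh_0$. The step you flag as the main obstacle is handled in the paper exactly as you suggest, by $\ICoh_0$-descent in the setting of possibly unbounded stacks from \cite{centerH}.
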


\begin{proof}
We exploit the monoidal functor $\QCoh(Y) \to \H(Y/Z)$ and compute the trace via the bar construction in the symmetric monoidal $\infty$-category $\QCoh(Y) \mmod$.
Namely:
$$
\Tr(\H(Y/Z))
\simeq
\uscolim{[n] \in \bDelta^\op}
\Bigt{
\H(Y/Z)^{(\otimes_{\QCoh(Y)})^{n+1}}
\usotimes{\QCoh(Y) \otimes \QCoh(Y)} 
\QCoh(Y)
}.
$$
Applying \cite[Proposition 3.4.5]{centerH} repeatedly, we obtain 
$$
\Tr(\H(Y/Z))
\simeq
\uscolim{[n] \in \bDelta^\op} \; 
\ICoh_0
\left(
\Bigt{
\underset{n+1 \, \text{times}}{\underbrace{Y \times_Z \cdots \times_Z Y}}
\times_Z LZ
}^\wedge_{LY}
\right).
$$
It is clear that such sequence of formal completions is the Cech resolution of the map $(Y \times_Z LZ)^\wedge_{LY} \to (LZ)^\wedge_{LY}$. Moreover, the structure functors are $(*,0)$-pushforwards along the structure maps of the Cech resolution.
By descent of $\ICoh_0$, we conclude that $\Tr(\H(Y/Z)) \simeq \ICoh_0((LZ)^\wedge_{LY})$ as wanted.
\end{proof}

\sssec{}

In particular, setting $Z = Y^{S^{n-1}}$ for $n \geq 0$, we see that $\H(Y/Z) \simeq \Sph(Y,n)$ and that $LZ = Y^{S^1 \times S^{n-1}}$. Thus, we obtain the following instance of Claim \ref{magic quasi-theorem}.
\begin{cor} \label{cor:chiral-S1}
For $n \geq 0$ and $Y$ an algebraic stack locally of finite presentation, there is a natural equivalence 
\begin{equation} \label{eqn:int of Sph over S1}
\int_{S^1} \Sph(Y, n)
\simeq
\ICoh_0
\Bigt{
\bigt{ Y^{S^1 \times S^{n-1}}
}
^\wedge_{Y^{S^1}}
}
\end{equation}
of $E_n$-monoidal DG categories, where the $E_n$-structure on the RHS is induced by the pair of pants construction for $S^{n-1}$, together with the functoriality of $\ICoh_0$.
\end{cor}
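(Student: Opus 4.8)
The plan is to obtain the corollary as a direct specialization of Theorem \ref{thm:trace of H relative Y/Z}, once $\Sph(Y,n)$ has been recognized as a relative convolution category and $\int_{S^1}$ has been identified with the trace. First I would use excision for topological chiral homology (\cite{fact-hom}) to write, for the underlying $E_1$-monoidal structure of $\Sph(Y,n)$,
$$
\int_{S^1} \Sph(Y,n) \simeq \Tr(\Sph(Y,n)) = \Sph(Y,n) \usotimes{\Sph(Y,n) \otimes \Sph(Y,n)^\rev} \Sph(Y,n).
$$

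The substance is then to pin down which $E_1$-factor of the $\fE_{n+1}$-structure is being used and to present it as a convolution. For this I decompose the sphere as the homotopy pushout of its two hemispheres along the equator, $S^n \simeq D^n \sqcup_{S^{n-1}} D^n$. Cotensoring into $Y$ turns this colimit of spaces into a limit of stacks; since $D^n$ is contractible we have $Y^{D^n} \simeq Y$, hence $Y^{S^n} \simeq Y \times_{Y^{S^{n-1}}} Y$, with the constant-map inclusion $Y \to Y^{S^n}$ going over to the diagonal (so the formal completions match). Setting $Z := Y^{S^{n-1}}$, this gives $\Sph(Y,n) \simeq \H(Y/Z)$, and I would verify that the convolution on $\H(Y/Z)$, coming from the Čech groupoid $Y \times_Z Y \rightrightarrows Y$, is exactly the chosen $E_1$-factor, namely the one gluing two spheres across an equatorial $S^{n-1}$.

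With this identification in hand, Theorem \ref{thm:trace of H relative Y/Z} yields $\int_{S^1}\Sph(Y,n) \simeq \ICoh_0((LZ)^\wedge_{LY})$ on the level of plain DG categories, and it remains only to compute the loop spaces. Since $L(-) = (-)^{S^1}$ commutes with cotensoring, $LZ = (Y^{S^{n-1}})^{S^1} \simeq Y^{S^1 \times S^{n-1}}$ and $LY = Y^{S^1}$, producing exactly $\ICOHTOP{S^1}{S^1\times S^{n-1}}$.

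The main obstacle is the $E_n$-monoidal refinement, since Theorem \ref{thm:trace of H relative Y/Z} records only a DG-category equivalence. The clean way to organize this is to observe that the right-hand side is nothing but $\Sph(Y^{S^1}, n-1)$ --- indeed $(Y^{S^1})^{S^{n-1}} \simeq Y^{S^1\times S^{n-1}}$ --- which carries a tautological $E_n$-structure from the $S^{n-1}$ pair of pants. To match it, I would upgrade the identification of the second paragraph to an equivalence of $E_1$-algebra objects in the symmetric monoidal $\infty$-category of $E_n$-monoidal DG categories, using Dunn--Lurie additivity $E_{n+1} \simeq E_1 \otimes E_n$ (\cite{HA}) to peel off the convolution direction from the remaining $S^{n-1}$-directions. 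The trace functor is functorial in the $E_1$-variable and hence transports the residual $E_n$-structure to $\int_{S^1}$; the delicate point is to check that the bar/Čech computation underlying Theorem \ref{thm:trace of H relative Y/Z} --- the coresolution of $(LZ)^\wedge_{LY}$ together with its $(*,0)$-pushforward structure maps --- can be run $E_n$-equivariantly, i.e. compatibly with the pair-of-pants gluing on the $S^{n-1}$ factor. This compatibility of two commuting families of little disks is where I expect the real work to lie; the underlying equivalence of DG categories is essentially automatic.
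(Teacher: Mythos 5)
Your proposal follows essentially the same route as the paper: the paper likewise obtains the underlying DG-category equivalence by specializing Theorem \ref{thm:trace of H relative Y/Z} to $Z = Y^{S^{n-1}}$ (the identification $\Sph(Y,n)\simeq \H(Y/Z)$ being exactly your hemisphere decomposition $Y^{S^n}\simeq Y\times_{Y^{S^{n-1}}}Y$), and then handles the $E_n$-refinement by the same ``complementary directions'' argument you phrase via Dunn--Lurie additivity --- the paper writes $\RR^{n+1}\simeq \RR\times\RR^n$, exhibits the candidate $E_n$-structure and the comparison functor by explicit cospans ($S^1\times P_n$ and $(S^1\times D^n)^{\circ\circ}$), and observes that the trace/$E_1$-action happens in the first factor. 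The only cosmetic difference is that the paper makes the compatibility explicit through these cospan diagrams, whereas you defer it to an $E_n$-equivariance check on the bar construction; the substance is identical.
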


\begin{proof}
Only the $E_n$-structure needs an explanation. To this end, let us write down the candidate $E_n$-monoidal structure on
$$
\Tr(\Sph(Y,n))
\simeq
\ICoh_0
\Bigt{
\bigt{ Y^{S^1 \times S^{n-1}}
}
^\wedge_{Y^{S^1}}
}.
$$
Letting $P_n := (D^n)^{\circ \circ}$ denote the $n$-dimensional pair of pants, we consider the compatible pair of cospans:
$$ 
\begin{tikzpicture}[scale=1.5]
\node (00) at (0,0) {$S^1 \sqcup {S^1}$}; 
\node (10) at (3,0) {$S^1$}; 
\node (20) at (6,0) {$ S^1$.};
\node (01) at (0,1)  {$(S^1 \times S^{n-1}) \sqcup (S^1 \times S^{n-1}) $};
\node (11) at (3,1) {$S^1 \times P_n$};
\node (21) at (6,1) {$ S^1 \times S^{n-1}$};
\path[<-,font=\scriptsize,>=angle 90]
(11.east) edge node[below] {$ $} (21.west);
\path[<-,font=\scriptsize,>=angle 90]
(10.east) edge node[below] {$ $} (20.west);
\path[->,font=\scriptsize,>=angle 90]
(01.east) edge node[below] {$ $} (11.west);
\path[->,font=\scriptsize,>=angle 90]
(00.east) edge node[below] {$ $} (10.west);
\path[->,font=\scriptsize,>=angle 90]
(01.south) edge node[right] {} (00.north);
\path[->,font=\scriptsize,>=angle 90]
(11.south) edge node[below] {$ $} (10.north);
\path[->,font=\scriptsize,>=angle 90]
(21.south) edge node[below] {$ $} (20.north);
\end{tikzpicture}
$$
Then the $E_n$-monoidal structure on the trace is given by applying $\Maps(-, Y)$ to this diagram, and then by taking the $\ICoh_0$ pull-push functor.

\medskip

Let us also write the functor
\begin{equation} \label{eqn:functor from  A otimes A to Tr(A)}
\ICOHTOP {}{S^n}
\otimes
\ICOHTOP {}{S^n}
\longto
\ICOHTOP {S^1}{S^1 \times S^{n-1}}
\end{equation}
inducing the equivalence (\ref{eqn:int of Sph over S1}). This is given by the compatible map of cospans
$$ 
\begin{tikzpicture}[scale=1.5]
\node (00) at (0,0) {$\pt \sqcup \pt$}; 
\node (10) at (3,0) {$(S^1 \times D^{n}) \simeq S^1$}; 
\node (20) at (6,0) {$ S^1$.};
\node (01) at (0,1)  {$S^n \sqcup S^n$};
\node (11) at (3,1) {$(S^1 \times D^{n})^{\circ \circ}$};
\node (21) at (6,1) {$ S^1 \times S^{n-1}$};
\path[<-,font=\scriptsize,>=angle 90]
(11.east) edge node[below] {$ $} (21.west);
\path[<-,font=\scriptsize,>=angle 90]
(10.east) edge node[below] {$ $} (20.west);
\path[->,font=\scriptsize,>=angle 90]
(01.east) edge node[below] {$ $} (11.west);
\path[->,font=\scriptsize,>=angle 90]
(00.east) edge node[below] {$ $} (10.west);
\path[->,font=\scriptsize,>=angle 90]
(01.south) edge node[right] {} (00.north);
\path[->,font=\scriptsize,>=angle 90]
(11.south) edge node[below] {$ $} (10.north);
\path[->,font=\scriptsize,>=angle 90]
(21.south) edge node[below] {$ $} (20.north);
\end{tikzpicture}
$$

\medskip

Now, writing $\RR^{n+1} \simeq \RR \times \RR^n$, it suffices to notice that the action of $\Sph(Y,n) \otimes \Sph(Y,n)^\rev$ on $\Sph(Y,n)$ and the functor (\ref{eqn:functor from  A otimes A to Tr(A)}) \virg{happen} on the first factor, whence their are compatible with the complementary $E_n$-monoidal structure.
\end{proof}

\begin{example}
For $n=0$, we have $S^{-1} = \emptyset$ and consequently $Y^{\emptyset} = \pt$. Hence, in this case the theorem states that $\Tr(\H(Y)) := \int_{S^1}\H(Y) \simeq \vDmod(LY)$. This result had already been established in \cite{centerH}.
\end{example}

\sssec{The value on higher dimensional tori}

Iterating Theorem \ref{thm:trace of H relative Y/Z}, one immediately obtains the value of $\bT_n$ on all tori $T^d \simeq (S^1)^d$ of dimension $d \leq n+1$: an equivalence
$$
\int_{T^d} \Sph(Y,n)
\simeq
\ICOHTOP {T^d} {T^d \times S^{n-d}}
$$
of $E_{n-d+1}$-algebras.

\ssec{The higher Hochschild homologies of the spherical category}

We can now compute the value of $\bT_n$ on all spheres of dimension $ 1 \leq d \leq n+1$. For a general $E_n$-algebra $\A$, the invariant $\int_{S^d} \A$ is called the $E_d$-Hochschild homology of $\A$, see \cite{fact-hom}.

\sssec{}

In our case, we wish show that 
$$
\int_{S^d} \Sph(Y,n)
\simeq
\ICOHTOP {S^d}{S^d \times S^{n-d}}
$$
as $E_{n+1-d}$-monoidal DG categories.
Since the LHS enjoys excision in the form of
$$
\int_{S^d} \Sph(Y,n)
\simeq
\Sph(Y,n)
\usotimes{\int_{S^{d-1}} \Sph(Y,n)}
\Sph(Y,n),
$$
we need to show that so does the RHS. This is guaranteed by the following instance of 
Theorem \ref{thm:gluing along boundary}, together with induction on $d$ (the case $d = 0$ being trivial, the case $d=1$ being Corollary \ref{cor:chiral-S1}).

\begin{thm} \label{thm:integral over Sigma zero}
For $Y$ an algebraic stack locally of finite presentation and $0 \leq d \leq n+1$,  the $\ICoh_0$ pull-push along the compatible pair of cospans
\begin{equation} \label{diag:bordism-for-Sigma zero equivalence}
\begin{tikzpicture}[scale=1.5]
\node (00) at (0,0) {$\pt \sqcup \pt$}; 
\node (10) at (3,0) {$S^d \times
D^{n+1-d}\simeq S^d$}; 
\node (20) at (6,0) {$S^d$};
\node (01) at (0,1)  {$S^n\sqcup S^n$};
\node (11) at (3,1) {$(S^d \times D^{n+1-d})^{\circ \circ}$};
\node (21) at (6,1) {$S^d \times S^{n-d}$};
\path[<- left hook ,font=\scriptsize,>=angle 90]
(11.east) edge node[below] {$ $} (21.west);
\path[<- ,font=\scriptsize,>=angle 90]
(10.east) edge node[below] {$ $} (20.west);
\path[right hook ->,font=\scriptsize,>=angle 90]
(01.east) edge node[below] {$ $} (11.west);
\path[ ->,font=\scriptsize,>=angle 90]
(00.east) edge node[below] {$ $} (10.west);
\path[->,font=\scriptsize,>=angle 90]
(01.south) edge node[right] {} (00.north);
\path[->,font=\scriptsize,>=angle 90]
(11.south) edge node[below] {$ $} (10.north);
\path[->,font=\scriptsize,>=angle 90]
(21.south) edge node[below] {$ $} (20.north);
\end{tikzpicture}
\end{equation}
descends to an $E_{n-1}$-monoidal equivalence
$$
\nonumber
\ICOHTOP {}{S^n}
\usotimes
{\ICOHTOP {S^{d-1}}{S^{d-1} \times S^{n-d+1}}}
\ICOHTOP {}{S^n}
$$
\begin{equation} \label{eqn:tensor-prod-computing- int over Sigma zero}
\xto{\; \; \simeq \; \; }
\ICOHTOP {S^d}{S^d \times S^{n-d}}.
\end{equation}
\end{thm}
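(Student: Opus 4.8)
The plan is to read this statement as nothing more than the excision Theorem~\ref{thm:gluing along boundary} applied to the standard decomposition of $S^d$ into two hemispheres. Write $S^d = D^d \sqcup_{S^{d-1}} D^d$, with each closed hemisphere $D^d$ regarded as a bordism from $\emptyset$ to the equator $S^{d-1}$. I would invoke Theorem~\ref{thm:gluing along boundary} with $M = N = D^d$ and $B = S^{d-1}$; since the bordisms $M, N$ are here $d$-dimensional, the value of the dimension parameter of that theorem must be taken to be $d-1$, so that its disk and sphere factors become $D^{n-d+1}$ and $S^{n-d+1}$, and the running hypothesis $(d-1) \le n$ is precisely $d \le n+1$. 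The extreme case $d = 0$, where the equator degenerates ($S^{-1} = \emptyset$), is trivial and handled directly.

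Granting the application, the three categories produced by Theorem~\ref{thm:gluing along boundary} coincide with those in \eqref{eqn:tensor-prod-computing- int over Sigma zero} after elementary homotopy equivalences of the underlying spaces; recall that $\ICoh_0$ of a formal completion of mapping stacks depends only on the homotopy types involved. Indeed, $D^d$ is contractible, so $Y^{D^d} \simeq Y$; and $D^d \times D^{n-d+1} \cong D^{n+1}$ is a disk, so $\partial(D^d \times D^{n-d+1}) \simeq S^n$. Consequently each outer factor $\ICOHTOP{D^d}{\partial(D^d \times D^{n-d+1})}$ is identified with $\ICOHTOP{}{S^n} = \Sph(Y,n)$. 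The amalgamating algebra is on the nose $\ICOHTOP{S^{d-1}}{S^{d-1} \times S^{n-d+1}}$. Lastly $\partial S^d = \emptyset$ gives $\partial(S^d \times D^{n-d+1}) = S^d \times \partial D^{n-d+1} = S^d \times S^{n-d}$, so the target is $\ICOHTOP{S^d}{S^d \times S^{n-d}}$. Matching these, the equivalence delivered by Theorem~\ref{thm:gluing along boundary} is exactly \eqref{eqn:tensor-prod-computing- int over Sigma zero}.

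The step I expect to be the main obstacle is purely combinatorial-topological: one must check that the compatible pair of cospans \eqref{diag:bordism-for-Sigma zero equivalence} induces, via the functoriality of $\ICoh_0$ recalled in Section~\ref{sssec:functoriality of ICOH0}, the same pull--push functor as the cospan used in Theorem~\ref{thm:gluing along boundary}, once the above homotopy equivalences are inserted. The only nontrivial comparison is in the central terms: I would verify that the configuration space $(S^d \times D^{n-d+1})^{\circ\circ}$ agrees, up to homotopy and compatibly with all four maps touching it, with the pushout $\partial(D^d \times D^{n-d+1}) \sqcup_{S^{d-1}} \partial(D^d \times D^{n-d+1})$ prescribed by the gluing cospan. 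The key point making this work is that the leftmost square of \eqref{diag:bordism-for-Sigma zero equivalence} is a homotopy pushout: capping the two boundary spheres $S^n$ of the removed disks by cones fills the holes and recovers $S^d \times D^{n-d+1} \simeq S^d$. A routine diagram chase then shows the two presentations define the same cospan in the homotopy category, so that the two functors agree and there is no further analytic content beyond Theorem~\ref{thm:gluing along boundary}.

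Finally, the monoidal assertion is inherited from the little-disk directions transverse to the gluing, by the same argument as in the remark following Theorem~\ref{thm:gluing along boundary} and in the proof of Corollary~\ref{cor:chiral-S1}: the relative tensor product and the gluing functor both \virg{happen} in the directions filling $S^d$, hence are compatible with the complementary little-disks structure displayed on the right-hand side of \eqref{eqn:tensor-prod-computing- int over Sigma zero}.
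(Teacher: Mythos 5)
Your proposal is correct and coincides with the paper's own argument: the theorem is presented there precisely as the instance of Theorem~\ref{thm:gluing along boundary} obtained from the hemisphere decomposition $S^d = D^d \sqcup_{S^{d-1}} D^d$ (with the dimension parameter shifted to $d-1$, the case $d=0$ trivial, and $d=1$ being Corollary~\ref{cor:chiral-S1}), with the identifications $Y^{D^d}\simeq Y$, $\partial(D^d\times D^{n-d+1})\simeq S^n$ and $\partial(S^d\times D^{n-d+1})= S^d\times S^{n-d}$ left implicit. Your extra care in matching the central cospan term $(S^d\times D^{n+1-d})^{\circ\circ}$ with the glued boundary $\partial(D^d\times D^{n-d+1})\sqcup_{S^{d-1}}\partial(D^d\times D^{n-d+1})$, and in checking the leftmost square is a homotopy pushout, only makes explicit what the paper leaves as routine.
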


\begin{rem}
At this point, we have already proven Theorem \ref{main-thm-intro} in the case of genus zero: it suffices to set $Y=BG$ and $n=2$. Note moreover that $\LSG^\Betti(S^2) \simeq \LS_G(\PP^1)$ as algebraic stacks, where the latter is the stack of de Rham local systems on the algebraic curve $\PP^1$.
\end{rem}

\sssec{} \label{sssec:extension to dimension n+2}

From this description of $\int_{S^d} \Sph(Y,n)$, we see that the spherical TFT can be extended further to an $(n+2)$-dimensional TFT, that assigns a vector space to any closed oriented $(n+2)$-dinensional manifold. To see this, we invoke \cite[Remark 4.1.27]{cobordism-hyp}: we need to show that $\Sph(Y,n)$ is dualizable as a module over each of its Hochschild homologies. In our case this is clear, as $\Sph(Y,n)$ is dualizable as a DG category and each $\int_{S^d} \Sph(Y,n)$ is rigid.

\medskip

For instance, $\Sph(BG,0)$ gives rise to a $2$-dimensional theory that has been described in \cite{BZGN}.

\sec{Pairs of pants and Riemann surfaces} \label{sec:Riemann-surfaces}

In this last section, we wish to prove Theorem \ref{main-thm-intro}, or rather its generalization given in Section \ref{sssec:Riemann surfaces - statement}.
For these statements to make sense, we need $n \geq 1$, an assumption we keep in place henceforth.

\medskip

In other words, we wish to prove Claim \ref{magic quasi-theorem} on manifolds of dimension $\leq 2$. To this end, it is essential to come to grips with the value of the theory on the pair of pants. 

\medskip

Having control of $\int_{P_2} \Sph(Y,n)$, we apply Theorem \ref{thm:gluing along boundary} repeatedly to obtain the integral of $\Sph(Y,n)$ on any Riemann surface.

\ssec{Pairs of pants}

Thanks to a mild generalization of Theorem \ref{thm:gluing along boundary}, we can actually compute $\int_{P_d} \Sph(Y,n)$ for any $d$-dimensional pair of pants $P_d = (D^d)^{\circ \circ}$, with $1 \leq d \leq n+1$.\footnote{We expect this computation to be enough to prove Claim \ref{magic quasi-theorem} using a handle decomposition of $M$.} (The case $d=1$ is not interesting, so let us assume $d \geq 2$.)

\sssec{}

Let $\A$ an $E_{d-1}$-algebra. Then the topological chiral homology $\int_{P^d} \A$ comes with the structure of an $E_{d-1}$-algebra in the $E_{d-1}$-monoidal $\infty$-category 
$$
\left(
\int_{S^{d-1}} \A
\otimes
\int_{S^{d-1}} \A
,
\int_{S^{d-1}} \A
\right) \bbimod.
$$
Let us write down these three actions.

\sssec{}

Consider the decomposition $P^d \simeq (H^d)^\circ \sqcup_{D^{d-1}} (H^d)^\circ$, where $H^{d}$ denotes the $d$-dimensional hemisphere obtained by cutting $D^d$ is half.
Accordingly, the TFT guarantees that 
$$
\int_{P_d} \A
\simeq
\int_{ (H^d)^\circ} \A
\usotimes{\int_{(D^{d-1})} \A}
\int_{(H^d)^\circ} \A
\simeq
\int_{S^{d-1}} \A
\usotimes{\A}
\int_{S^{d-1}} \A,
$$
where the two actions of $\A \simeq \int_{D^{d-1}} \A$ on $\int_{S^{d-1}} \A \simeq \int_{ (H^d)^\circ} \A$ are determined by  modifications along the flat face of each of the holey hemispheres $(H^d)^\circ$.

\medskip

Now, imagining $P_d$ as a bordism from $S^{d-1} \sqcup S^{d-1}$ to $S^{d-1}$, the two \virg{incoming} actions of $\int_{S^{d-1}} \A$ on 
$$
\int_{S^{d-1}} \A
\usotimes{\A}
\int_{S^{d-1}} \A
$$
are the obvious ones induced by the action of $\int_{S^{d-1}} \A$ on itself. 

\sssec{}

The third action, the one coming from the \virg{outgoing} piece of the boundary, is slightly more complicated to write. Since the decompositon $
P^d \simeq (H^d)^\circ \sqcup_{D^{d-1}} (H^d)^\circ$ broke the symmetry, we also need to break $\int_{S^{d-1}} \A$ as
$$
\A
\usotimes{
\int_{S^{d-2}} \A
}
\A.
$$
Then the action of the latter on 
$$
\int_{S^{d-1}} \A
\usotimes{\A}
\int_{S^{d-1}} \A
$$
is given by the two componentwise actions of $\A$ on $\int_{S^{d-1}} \A$.

\sssec{}

Let us now apply this to $\A = \Sph(Y,n)$. Going through the proof of Theorem \ref{thm:gluing along boundary}, it is easy to realize that the statement remains valid for $M$ and $N$ manifolds with corners, \emph{as long as} the chose piece of boundary $B$ has no corners. This is the case in our situation: $P^d \simeq (H^d)^\circ \sqcup_{D^{d-1}} (H^d)^\circ$.

\medskip

From this we get the proof of Claim \ref{magic quasi-theorem} in the present case, that is, an equivalence
\begin{equation} \label{eqn:top-chiral-for-pants}
\int_{P_d}
\Sph(Y,n)
\simeq
\ICoh_0
\Bigt{
\bigt{
Y^{\partial(P \times D^{n+1-d})}
}
^\wedge_{Y^P}
}.
\end{equation}
It takes some unraveling to check that the three actions of $\int_{S^d} \Sph(Y,n)$ on $\int_{P^d} \Sph(Y,n)$ are given as in Claim \ref{magic quasi-theorem} applied to each component of the boundary of $P_d$.

\begin{example}
Let us look at the case of $n=2$ (our original case) and $d=2$, so that $P$ is the usual pair of pants. Let us consider $P_g := P^2_g$ to be a $g$-ary pair of pants: that is a disk with $g$ small disks removed.
 Then (a straightforward generalization of the) equivalence (\ref{eqn:top-chiral-for-pants}) takes the form of
$$
\int_{P_g}
\Sph(Y,2)
\simeq
\ICOHTOP
{(S^1)^{\vee g}}{X_g}
$$
where $X_g$ is a Riemann surface of genus $g$ and the bouquet of $g$ circles wraps around it in the obvious way. The $g+1$ actions of 
$$
\int_{S^1}
\Sph(Y,2)
\simeq
\ICOHTOP{S^1}{X_1}
$$
on $\int_{P_g}
\Sph(Y,2)$ are given by bubbling off $g+1$ tori: the \virg{incoming} $g$ ones are placed around the $g$ \virg{holes} of $X_g$,  the \virg{outgoing} one is placed around the entire $X_g$. 
\end{example}

\ssec{Topological chiral homology on surfaces}

Having established the value of the TFT attached to $\Sph(Y,n)$ on the pair of pants, we can now proceed to computing the theory in dimension $2$. We follow the argument of \cite[Section 3.2]{BZGN}.

\sssec{}

Let $\Bord_{1,2}$ denote the $\infty$-category of oriented bordisms of (oriented) surfaces. I.e., objects are unions of circles and morphisms are oriented surfaces with boundary given by circles (some ingoing, the rest outgoing).

We will look at two symmetric monoidal functors from $\Bord_{1,2}$ to $(\Alg_{(n)}(\DGCat))^{1-\Cat}$, the latter being the Morita $\infty$-category of $E_n$-monoidal DG categories (that is, the $\infty$-category underlying $\Alg^\circ_{(n)}(\DGCat)$).

\medskip

On the one hand, we can consider the restriction of $\bT_n$ to $\Bord_{1,2}$, which we denote by the same symbol:
$$
\bT_n
:
\Bord_{1,2}
\longto
\Alg^\circ_{(n)}(\DGCat)^{1-\Cat}.
$$
On the other hand, we can consider the \emph{lax} functor
$$
\bT'_n
:
\Bord_{1,2}
\longto
\Alg^\circ_{(n)}(\DGCat)
$$
defined as in Claim \ref{magic quasi-theorem}, that is, by the assignment $M^d \squigto \ICoh_0
\bigt{
(Y^{\partial(M \times D^{n+1-d})   })^\wedge_{Y^M}
}$ (for both objects and morphisms).
The excision theorem, Theorem \ref{thm:gluing along boundary}, proves that $\bT_n'$ is a genuine functor, rather than a lax one. Hence, $\bT_n'$ descends to a functor
$$
\Bord_{1,2}
\longto
\Alg^\circ_{(n)}(\DGCat)^{1-\Cat},
$$
denoted again by $\bT'_n$. We claim that $\bT'_n$ is symmetric monoidal: indeed, this is an immediate consequence of the tensor product theorem for $\ICoh_0$, see \cite[Proposition 3.12]{centerH}. The latter simply states that the natural arrow
$$
\ICoh_0((Z_1)^\wedge_{W_1})
\otimes
\ICoh_0((Z_2)^\wedge_{W_2})
\longto
\ICoh_0((Z_1 \times Z_2)^\wedge_{W_1 \times W_2})
$$
is an equivalence.

\sssec{}

Thus, we have two symmetric monoidal functors, $\bT_n$ and $\bT'_n$, carrying $\Bord_{1,2}$ into $(\Alg_{(n)}(\DGCat))^{1-\Cat}$.
Since they coincide when evaluated on $D^2$ and on $P^2$, they must coincide as functors (this is \cite[Proposition 3.3]{BZGN}). The assertion of Theorem \ref{main-thm-intro} follows.



\end{document}